\newcommand{\be}{\begin{equation}}
\newcommand{\ee}{\end{equation}}
\newcommand{\beq}{\begin{eqnarray}}
\newcommand{\eeq}{\end{eqnarray}}
\newtheorem{prop}{Proposition}[section]
\newtheorem{remark}[prop]{Remark}
\newtheorem{defi}[prop]{Definition}
\newtheorem{pro}[prop]{Problem}
\def\begeq{\begin{equation}}
\def\endeq{\end{equation}}
\def\odot{\setbox0=\hbox{$\bigcirc$}\relax \mathbin {\hbox
to0pt{\raise.5pt\hbox to\wd0{\hfil $\wedge$\hfil}\hss}\box0 }}
\numberwithin{equation} {section}
\numberwithin{equation}{section}
\newtheorem{theorem}{\bf Theorem}[section]
\newtheorem{assumption}[theorem]{\bf Assumption}
\newtheorem{lemma}[theorem]{\bf Lemma}
\newtheorem{corollary}[theorem]{\bf Corollary}
\begin{document}

\title[Dual Orlicz Minkowski problems]
 {Flow by Gauss curvature to Dual Orlicz-Minkowski problems}

\author{
Li Chen, Qiang Tu, Di Wu, Ni Xiang}
\address{
Hubei Key Laboratory of Applied Mathematics, Faculty of Mathematics and Statistics,
Hubei University, Wuhan 430062, China } \email{chernli@163.com, qiangtu@hubu.edu.cn,
wudi19950106@126.com, nixiang@hubu.edu.cn}

\thanks{
This research was supported by Hubei Provincial Department of
Education Key Projects D20171004, D20181003 and the National Natural
Science Foundation of China No.11971157.}

\date{}
\begin{abstract}
In this paper we study a normalised anisotropic Gauss curvature flow
of strictly convex, closed hypersurfaces in the Euclidean space
$\mathbb{R}^{n+1}$. We prove that the flow exists for all time and
converges smoothly to  the unique, strictly convex solution of a
Monge-Amp\`ere type equation. Our argument provides a parabolic
proof in the  smooth category for the existence of solutions to the
Dual Orlicz-Minkowski problem introduced by Zhu, Xing and Ye.
\end{abstract}

\maketitle {\it \small{{\bf Keywords}: Gauss curvature flow, convex
hypersurface, Monge-Amp\`ere equation.}

{{\bf MSC}: Primary 53C44, Secondary
35K96.}
}

\section{Introduction}

As we known, the Gauss curvature flow was introduced by Firey
\cite{Fi74}  to model the shape change of worn stones.  The first
celebrated result was proved by Andrews in \cite{An99} for Gauss
curvature flow, where Firey's conjecture that convex surfaces moving
by their Gauss curvature become spherical as they contract to points
was proved. Guan and Ni \cite{Guan17} proved that convex
hypersurfaces in $\mathbb{R}^{n+1}$ contracting by the Gauss
curvature flow converge (after rescaling to fixed volume) to a
smooth uniformly convex self-similar solution of the flow. Soon,
Andrews, Guan and Ni \cite{An16} extended the results in
\cite{Guan17} to the flow by powers of the Gauss curvature
$K^{\alpha}$ with $\alpha>\frac{1}{n+2}$. Recently, Brendle, Choi
and Daskalopoulos \cite{Br17} proved that round spheres are the only
closed, strictly convex self-similar solutions to the
$K^{\alpha}$-flow with $\alpha>\frac{1}{n+2}$. Therefore, the
generalized Firey's conjecture proposed by Andrews in \cite{An96}
was completely solved, that is, the solutions of the flow by powers
of the Gauss curvature converge to spheres for any
$\alpha>\frac{1}{n+2}$. We also refer to \cite{Ch85,An98,An03,An12}
and the references therein.

As a natural extension of Gauss curvature flows, anisotropic Gauss
curvature flows have attracted considerable attention and they
provide alternative proofs for the existence of solutions to
elliptic PDEs arising in geometry and physics, especially for the
Minkowski-type problem. For example a alternative proof based on the
logarithmic Gauss curvature flow was given by Chou-Wang in
\cite{Ch00} for the classical Minkowski problem, in \cite{Wang96}
for a prescribing Gauss curvature problem. Using  a contracting
Gauss curvature flow, Li-Sheng-Wang \cite{LiQ} have provided a
parabolic proof  in the smooth category for the classical
Aleksandrov and dual Minkowski problems. Recently, two kinds of
normalised anisotropic Gauss curvature flow are used to prove the
$L_p$ dual Minkowski problems by Chen-Huang-Zhao \cite{Chen-H19} and
Chen-Li \cite{Chen-L19}, respectively. These results are major
source of inspiration for us.

Let $\mathcal{M}_0$ be a smooth, closed, strictly convex
hypersurface in $\mathbb{R}^{n+1}$ enclosing the origin. In this paper, we study
the long-time behavior of the following normalised anisotropic Gauss
curvature flow which is a family of hypersurfaces $\mathcal{M}_t$
given by smooth maps $X: \mathcal{M}\times [0, T)\rightarrow
\mathbb{R}^{n+1}$ satisfying the initial value problem
\begin{equation}\label{N-Eq}
\left\{
\begin{aligned}
&\frac{\partial X}{\partial t}=-\theta(t)f(\nu)
\frac{r^{n+1}}{\varphi(r)}K\nu+X,&\\
&X(\cdot,0)=X_{0},
\end{aligned}
\right.
\end{equation}
where $\nu$ is the unit outer vector of $\mathcal{M}_t$ at $X$, $K$
denotes the Gauss curvature of $\mathcal{M}_t$ at $X$, $r=|X|$
denotes the distance form $X$ to the origin, $f\in
C^{\infty}(\mathbb{S}^n)$ with $f>0$, and
$$\theta(t)=\int_{\mathbb{S}^n}\varphi(r(\xi, t))d\xi \
\bigg[\int_{\mathbb{S}^n}f(x)dx\bigg]^{-1}.$$ Notice that $u$
denotes the support function of $\mathcal{M}_t$ given by $u=\langle
X, \nu\rangle$ and $\varphi$ is a positive smooth function.

The reason that we study the  flow \eqref{N-Eq} is to explore the
existence of the smooth solutions to the dual Orlicz-Minkowski
problem introduced by Zhu-Xing-Ye \cite{Zhu20}, which is related to
the following   Monge-Amp\`ere type equation
\begin{eqnarray}\label{Min}
\frac{u \ \varphi(r)}{r^{n+1}}\cdot \mbox{det} (u_{ij}+ u \
\delta_{ij})=f(x) \quad \mbox{on} \quad \mathbb{S}^n,
\end{eqnarray}
where $r=\sqrt{|Du|^2+u^2}$.
In deed, let $\mathcal{K}_0$ be the set of all convex bodies in $\mathbb{R}^{n+1}$ which contain the origin in their interiors, $\varphi :(0, +\infty)\rightarrow (0, +\infty)$ be a continuous function.
Zhu-Xing-Ye \cite{Zhu20} have introduced the definition of the dual Orlicz curvature measure $\widetilde{C}_{\varphi}(K, \cdot)$,
and posed the following dual Orlicz-Minkowski problem:

\begin{pro}[Dual Orlicz-Minkowski problem]
Under what conditions on $\varphi$ and a nonzero finite Borel measure $\mu$
 on $\mathbb{S}^n$, there exists a constant $c>0$ and a $K\in \mathcal{K}_0$ such that $\mu=c\widetilde{C}_{\varphi}(K, \cdot)$?
 \end{pro}
 When $\mu$ has a density $f$, this Minkowski problem is equivalent to solve the Monge-Amp\`ere type equation \eqref{Min}.
 When $\varphi(r)=r^q$, this becomes the dual Minkowsi problem for the $q$-th dual curvature considered by Huang-Lutwak-Yang-Zhang \cite{Huang16}. It is worth pointing out that  they also proved the existence of symmetric
 solutions for the case $q\in (0, n+1)$ under some conditions.
 For $q=n+1$, the dual Minkowski problem becomes the logarithmic Minkowski problem which studied in \cite{BLYZ}.
 For $q<0$,the existence and uniqueness of weak solution were obtained by Zhao \cite{ZY}.

It is to be expected that the  flow \eqref{N-Eq} converges to the
solution of the equation \eqref{Min}. The main idea is to find a
suitable functional which is monotonic under the flow \eqref{N-Eq}.
The difficulty of our proof lies the inhomogeneous term
$\varphi(r)$. To statement our theorem, we need the following
assumption.

\begin{assumption}\label{Ass1}
$\Phi: (0, +\infty)\rightarrow (0, +\infty)$ is a continuous
function such that
$$\Phi(t)=\int_{0}^{t}\frac{\varphi(s)}{s}ds$$ exists for every
$t>0$.
\end{assumption}

\begin{theorem}\label{main}
Assume that $f \in C^{\infty}(\mathbb{S}^n)$ is a positive smooth
function and $\varphi: (0, +\infty)\rightarrow (0, +\infty)$ is a
smooth function. Let $\mathcal{M}_0 \subset \mathbb{R}^{n+1}$ be a
strictly convex, closed hypersurface which contains the origin in
its interior. \

(i) If $\max_{s>0}s\varphi'(s)\varphi^{-1}(s)<0$ for any $t \in (0, +\infty)$,
then the normalised flow \eqref{N-Eq} has a unique smooth solution,
which exists for any time $t \in [0, \infty)$. For each $t \in [0,
\infty)$, $\mathcal{M}_t=X(\mathbb{S}^n, t)$ is a closed, smooth and
strictly convex hypersurface and the support function $u(x, t)$ of
$\mathcal{M}_t=X(\mathbb{S}^n, t)$ converges smoothly, as
$t\rightarrow \infty$, to the unique positive, smooth and strictly
convex solution of the equation \eqref{Min} with $f$ replaced by
$\lambda_0f$ for some $\lambda_0>0$. \

(ii) Under the assumption \eqref{Ass1}, if $f$ is in addition even
function and the initial hypersurface $\mathcal{M}_0$ is
origin-symmetric, then the normalised flow \eqref{N-Eq} has a unique
smooth solution, which exists for any time $t \in [0, \infty)$. For
each $t \in [0, \infty)$, $\mathcal{M}_t=X(\mathbb{S}^n, t)$ is a
closed, smooth, strictly convex and origin-symmetric hypersurface
and the support function $u(x, t)$ of $\mathcal{M}_t=X(\mathbb{S}^n,
t)$ converges smoothly, as $t\rightarrow \infty$, to the unique
positive, smooth, strictly convex and even solution of the equation
\eqref{Min} with $f$ replaced by $\lambda_0f$ for some
$\lambda_0>0$.
\end{theorem}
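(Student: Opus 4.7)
The plan is to recast the flow \eqref{N-Eq} as a scalar parabolic PDE for the support function,
$$u_{t}(x,t)=-\theta(t)\,f(x)\,\frac{r^{n+1}}{\varphi(r)}\,\frac{1}{\det(u_{ij}+u\delta_{ij})}+u(x,t),\qquad r=\sqrt{u^{2}+|Du|^{2}},$$
whose stationary solutions are precisely the positive solutions of \eqref{Min} (after a rescaling $f\mapsto\lambda_{0}f$). Two structural observations will drive the analysis. First, the entropy $\mathcal{J}(t)=\int_{\mathbb{S}^{n}}f(x)\log u(x,t)\,dx$ is non-increasing in both cases: a direct computation gives
$$\mathcal{J}'(t)=\int_{\mathbb{S}^{n}} f\,dx-\theta(t)\int_{\mathbb{S}^{n}}\frac{f^{2}\,r^{n+1}\,K}{\varphi(r)\,u}\,dx,$$
and the Cauchy--Schwarz inequality, together with the Gauss-map identity $\int_{\mathbb{S}^{n}}\frac{\varphi(r)\,u\,\det h}{r^{n+1}}\,dx=\int_{\mathbb{S}^{n}}\varphi(\rho)\,d\xi=\theta(t)\int f\,dx$, yields $\mathcal{J}'(t)\le 0$, with equality exactly at solutions of \eqref{Min}. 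Second, in case (ii), where Assumption \ref{Ass1} ensures that $\Phi$ is well defined, the dual Orlicz volume $\widetilde{V}_{\varphi}(K_{t})=\int_{\mathbb{S}^{n}}\Phi(\rho(\xi,t))\,d\xi$ is moreover \emph{conserved} along the flow (by the same surface-integral identity combined with the definition of $\theta(t)$), and origin-symmetry of $\mathcal{M}_{0}$ together with evenness of $f$ is propagated by the flow.

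From these functionals I would extract the $C^{0}$ bound $c\le u(\cdot,t)\le C$. In case (i), the hypothesis $\max_{s>0}s\varphi'(s)/\varphi(s)<0$ --- equivalently $\varphi(s)\le C_{0}\,s^{-\alpha}$ for some $\alpha>0$ --- rules out $u_{\max}\to\infty$, because then the speed factor $r^{n+1}/\varphi(r)$ would blow up and overwhelm the drift term $+u$, and combined with the monotonicity of $\mathcal{J}$ it likewise excludes $u_{\min}\to 0$. In case (ii), conservation of $\widetilde{V}_{\varphi}$ bounds the shape of $K_{t}$, and origin-symmetry forces $\rho_{\max}\le C\rho_{\min}$, so that $u$ is pinned between two constants. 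The gradient estimate $|Du|\le C$ (equivalently $r\asymp u$) then follows from the $C^{0}$ bounds by standard convex analysis, since $Du$ is the tangential component of the boundary parametrisation on $\mathbb{S}^{n}$.

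The main obstacle is the $C^{2}$ estimate: uniform upper and lower bounds on the principal radii $\lambda_{i}$ of the matrix $(u_{ij}+u\delta_{ij})$. The time-dependent scaling $\theta(t)$ and the inhomogeneous factor $r^{n+1}/\varphi(r)$ both produce awkward extra terms when the parabolic maximum principle is applied to the standard auxiliary functions, and absorbing these requires the sign of $s\varphi'(s)/\varphi(s)$ in case (i), respectively symmetry together with Assumption \ref{Ass1} in case (ii). Following the strategy of \cite{LiQ,Chen-H19,Chen-L19}, I would first derive Tso-type two-sided bounds on the speed $u_{t}$ (which in turn control $\det h$) by applying the parabolic maximum principle to a quantity such as $u_{t}/(u-\varepsilon)$, and then bound the largest principal radius $\lambda_{\max}$ by applying the maximum principle to $\log\lambda_{\max}-A\log u+B\,r^{2}$, with the constants $A,B$ tuned to kill the bad terms arising from differentiating $\varphi(r)$ and $\theta(t)$; the lower bound on $\lambda_{\min}$ then follows from the upper bound on $\lambda_{\max}$ together with the lower bound on $\det h$.

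With the $C^{2}$ estimates in hand, the equation is uniformly parabolic and concave in the Hessian, so Krylov--Safonov, Evans--Krylov and Schauder bootstrapping yield uniform $C^{k,\alpha}$ bounds for every $k$, giving long-time existence and smoothness. For the limit $t\to\infty$, the boundedness (and monotonicity) of $\mathcal{J}$ forces $\mathcal{J}'(t_{k})\to 0$ along some sequence $t_{k}\to\infty$; the uniform smooth bounds extract a $C^{\infty}$ subsequential limit $u_{\infty}$ which saturates the Cauchy--Schwarz equality and therefore solves \eqref{Min} with $f$ replaced by $\lambda_{0}f$, where $\lambda_{0}=\lim\theta(t_{k})$. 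Finally, the uniqueness of positive, smooth, strictly convex (and, in case (ii), additionally even) solutions established in \cite{Zhu20} upgrades subsequential convergence to full smooth convergence $u(\cdot,t)\to u_{\infty}$ as $t\to\infty$.
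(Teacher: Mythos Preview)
Your overall architecture matches the paper (monotone entropy $\mathcal{J}$, conserved dual Orlicz volume, Tso-type curvature bound, then $\log\lambda_{\max}-A\log u+B|Du|^{2}$ for the principal radii, Evans--Krylov bootstrapping, and extraction of a limit along $\mathcal{J}'(t_{k})\to 0$). The $C^{2}$ part is essentially right, except that in the paper the case hypotheses play \emph{no} role in Lemmas~\ref{K} and \ref{C22}: once $C^{-1}\le u\le C$ is known, those estimates are uniform in both cases. The substantive problems are in your $C^{0}$ lower bounds.

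In case (ii) your assertion ``origin-symmetry forces $\rho_{\max}\le C\rho_{\min}$'' is false: a sequence of thin origin-symmetric ellipsoids has $\rho_{\max}/\rho_{\min}\to\infty$. The paper's argument (Lemma~\ref{C0-u-1}) is different and genuinely uses Assumption~\ref{Ass1}: if $\rho_{\min}(t_{i})\to 0$, pass to a Hausdorff limit $\Omega_{0}$; origin-symmetry gives $\rho_{\Omega_{0}}(\xi_{0})=\rho_{\Omega_{0}}(-\xi_{0})=0$, forcing $\Omega_{0}$ into a hyperplane, hence $\rho(\xi,t_{i})\to 0$ a.e., and then bounded convergence (this is where $\Phi(t)=\int_{0}^{t}\varphi(s)s^{-1}\,ds<\infty$ is needed) gives $V_{\varphi}(\Omega_{t_{i}})\to 0$, contradicting conservation. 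Your sketch does not contain this degeneration-plus-bounded-convergence step.

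In case (i) the sentence ``combined with the monotonicity of $\mathcal{J}$ it likewise excludes $u_{\min}\to 0$'' does not work: $\int f\log u$ can stay bounded while $u_{\min}\to 0$ (e.g.\ the support function of a body whose boundary touches the origin has $\log u$ integrable). The paper's mechanism is a separate parabolic maximum-principle estimate for $\psi=\tfrac12|D\log u|^{2}$ (Lemma~\ref{Gra}), and it is precisely in that computation that the sign condition $s\varphi'(s)/\varphi(s)<0$ is used to make the reaction term in $\partial_{t}\psi\le Q^{ij}\psi_{ij}+Q^{k}\psi_{k}+(\text{sign-controlled term})$ non-positive. From $|D\log u|\le C$ one gets $\max\log u-\min\log u\le C$, which together with the upper bound (coming from $\mathcal{J}$, not from any blow-up heuristic for the speed) yields $u\ge 1/C$. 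You have not identified this gradient estimate, which is the heart of case (i).
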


\begin{remark}
If $\varphi(r)=r^q$, the assumption
$\max_{s>0}s\varphi'(s)\varphi^{-1}(s)<0$ means $q<0$, and the
assumption \eqref{Ass1} is equivalent to $q\geq 0$, thus Theorem
\ref{main} recovers a parabolic proof in the smooth category for the
existence of solutions to the dual Minkowsi problem which given in
\cite{LiQ}.
\end{remark}

The organization of this paper is as follows. In Sect. 2 we start
with some preliminaries. In Sect. 3 we obtain $C^0$ and $C^1$
estimates. The $C^2$ estimates are given in Sect. 4. In Sect. 5 we
prove Theorem \ref{main}.

\section{Preliminaries}

\subsection{Setting and General facts}

\

For convenience, we first state our conventions on Riemann Curvature tensor and derivative notation.
Let $M$ be a smooth manifold and $g$ be a Riemannian metric on
$M$ with Levi-Civita connection $D$. For a $(s, r)$ tensor field $\alpha$ on $M$, its
covariant derivative $D \alpha$ is a $(s, r+1)$ tensor field given by
\begin{eqnarray*}
&&D \alpha(Y^1, .., Y^s, X_1, ..., X_r, X)
\\&=&D_{X} \alpha(Y^1, .., Y^s, X_1, ..., X_r)\\&=&X(\alpha(Y^1, .., Y^s, X_1, ..., X_r))-
\alpha(D_X Y^1, .., Y^s, X_1, ..., X_r)\\&&-...-\alpha(Y^1, .., Y^s, X_1, ..., D_X  X_r),
\end{eqnarray*}
the coordinate expression of which is denoted by
$$D \alpha=(\alpha_{k_{1}\cdot\cdot\cdot
k_{r}; k_{r+1}}^{l_{1}\cdot\cdot\cdot
l_{s}}).$$
We can continue to define the second covariant derivative of $\alpha$ as follows:
\begin{eqnarray*}
&&D^2 \alpha(Y^1, .., Y^s, X_1, ..., X_r, X, Y)
=(D_{Y}(D\alpha))(Y^1, .., Y^s, X_1, ..., X_r, X),
\end{eqnarray*}
the coordinate expression of which is denoted by
$$D^2 \alpha=(\alpha_{k_{1}\cdot\cdot\cdot
k_{r}; k_{r+1}k_{r+2}}^{l_{1}\cdot\cdot\cdot
l_{s}}).$$
Similarly, we can also define the higher order covariant derivative of $\alpha$:
$$D^3 \alpha=D(D^2 \alpha), ... $$
and so on.
For simplicity, the coordinate expression of the covariant differentiation will usually be
denoted by indices without semicolons, e.g. $$u_{i}, \quad u_{ij} \quad \mbox{or} \quad u_{ijk}$$ for
a function $u: M\rightarrow \mathbb{R}$.

Our convention for the Riemannian curvature (3,1)-tensor $Rm$ is defined by
\begin{equation*}
Rm(X, Y)Z=-D_{X}D_{Y}Z+D_{Y}D_{X}Z+D_{[X,
Y]}Z.
\end{equation*}
Pick a local coordinate chart $\{x^i\}_{i=1}^{n}$ of $M$. The
component of the (3,1)-tensor $Rm$ is defined by
\begin{equation*}
Rm\bigg({\frac{\partial}{\partial x^i}}, {\frac{\partial}{\partial x^j}}\bigg){\frac{\partial}{\partial x^k}}
\doteq R_{ijk}^{\ \ \ l}{\frac{\partial}{\partial x^l}}
\end{equation*}
and $R_{ijkl}=g_{lm}R_{ijk}^{\ \ \ m}$. Then, we have the standard commutation formulas (Ricci identities):
\begin{eqnarray}\label{RI}
\alpha_{k_{1}\cdot\cdot\cdot
k_{r};\ j i}^{l_{1}\cdot\cdot\cdot
l_{s}}-\alpha_{k_{1}\cdot\cdot\cdot
k_{r};\ i j}^{l_{1}\cdot\cdot\cdot
l_{s}}=\sum_{a=1}^{r}R^{\ \ \ m}_{ijk_{l}} \alpha_{k_{1}\cdot\cdot\cdot
k_{a-1}m k_{a+1}\cdot\cdot\cdot k_{r}}^{l_{1}\cdot\cdot\cdot
l_{s}}-\sum_{b=1}^{s}R^{\ \ \ l_b}_{ijm} \alpha_{k_{1}\cdot\cdot\cdot
k_{r}}^{l_{1}\cdot\cdot\cdot
l_{b-1}m l_{b+1}\cdot\cdot\cdot l_{r}}.
\end{eqnarray}
We list some facts which will be used frequently.
For the standard sphere $\mathbb{S}^n$ with the sectional curvature $1$,
$$R_{ijkl}=\delta_{ik}\delta_{jl}-\delta_{il}\delta_{jk}.$$
A special case of Ricci identity for a function $u: M \rightarrow \mathbb{R}$
will be usually used frequently:
\begin{equation*}
u_{kji}-u_{kij}=R^{\ \ \ m}_{ijk} u_m.
\end{equation*}
In particular, for a function $u: \mathbb{S}^n\rightarrow \mathbb{R}$,
\begin{eqnarray}\label{3C}
u_{kji}-u_{kij}=\delta_{ik}u_j-\delta_{jk}u_i.
\end{eqnarray}

Let $(M, g)$ be an immersed hypersurface in
$\mathbb{R}^{n+1}$ and $\nu$ be a given unit outward normal. The
second fundamental form $h_{ij}$ of the hypersurface $M$ with respect to
$\nu$ is defined by
\begin{eqnarray*}
h_{ij}=-\left\langle\frac{\partial^2 X}{\partial x^i\partial x^j},
\nu\right\rangle_{\mathbb{R}^{n+1}}.
\end{eqnarray*}

\subsection{Basic properties of convex hypersurfaces}

\

We first recall some basic properties of convex hypersurfaces.
Let $\mathcal{M}$ be a smooth, closed, uniformly convex hypersurface in
$\mathbb{R}^{n+1}$. Assume that $\mathcal{M}$ is parametrized by the inverse
Gauss map $$X: \mathbb{S}^n\rightarrow \mathcal{M}.$$
The support function $u: \mathbb{S}^n\rightarrow \mathbb{R}$ of $\mathcal{M}$ is defined
by
\begin{eqnarray*}\label{2.1}
u(x)=\sup\{\langle x, y\rangle: y \in \mathcal{M}\}.
\end{eqnarray*}
The supremum is attained at a point $y$ such that $x$ is the outer normal of $\mathcal{M}$
at $X$. It is easy to check that
\begin{eqnarray*}\label{2.2}
X=u(x)x+Du(x),
\end{eqnarray*}
where $D$ is the covariant derivative with respect to the standard metric $\sigma_{ij}$ of
the sphere $\mathbb{S}^n$. Hence
\begin{eqnarray}\label{2.3}
r=|X|=\sqrt{u^2+|Du|^2}.
\end{eqnarray}
Thus,
\begin{eqnarray}\label{2.3-1}
u=\frac{r^2}{\sqrt{r^2+|Dr|^2}}.
\end{eqnarray}
The second fundamental form of $\mathcal{M}$ is given by, see e.g.
\cite{An, Ur-1},
\begin{eqnarray}\label{2.4}
h_{ij}=u_{ij}+\sigma_{ij},
\end{eqnarray}
where $u_{ij}=D_iD_{j}u$ denotes the second order covariant derivative of $u$ with respect to the
spherical metric $\sigma_{ij}$. By Weingarten's formula,
\begin{eqnarray}\label{2.5}
\sigma_{ij}=\langle \frac{\partial \nu}{\partial x^i}, \frac{\partial \nu}{\partial x^j}\rangle
=h_{ik}g^{kl}h_{jl},
\end{eqnarray}
where $g_{ij}$ is the metric of $\mathcal{M}$ and $g^{ij}$ is its inverse. It follows from
\eqref{2.4} and \eqref{2.5} that the
principal radii of curvature of $\mathcal{M}$, under a smooth local orthonormal frame on $\mathbb{S}^n$, are
the eigenvalues of the matrix
\begin{eqnarray*}\label{2.6}
b_{ij}=u_{ij}+u\delta_{ij}.
\end{eqnarray*}
In particular, the Gauss curvature is given by
\begin{eqnarray*}
K=\frac{1}{\det(u_{ij}+u\delta_{ij})}.
\end{eqnarray*}

\subsection{Geometric flow and its associated functional}

\

For reader' convenience, the associated Mong-Amp\`ere
equation \eqref{Min} is restated here,
\begin{eqnarray*}
\frac{u\varphi(r)}{r^{n+1}}\cdot \det (u_{ij}+ u \ \delta_{ij})=f(x) \quad
\mbox{on} \quad \mathbb{S}^n.
\end{eqnarray*}
Recall the normalised anisotropic Gauss curvature flow \eqref{N-Eq}
\begin{equation*}
\left\{
\begin{aligned}
&\frac{\partial X}{\partial t}=-\theta(t)f(\nu)
\frac{r^{n+1}}{\varphi(r)}K\nu+X,&\\
&X(\cdot,0)=X_{0},
\end{aligned}
\right.
\end{equation*}
where $$\theta(t)=\int_{\mathbb{S}^n}\varphi(r(\xi, t))d\xi \
\bigg[\int_{\mathbb{S}^n}f(x)dx\bigg]^{-1}.$$ By the definition of
support function, we know $u(x, t)=\langle x, X(x, t)\rangle$.
Hence,
\begin{equation}\label{u-NEq}
\left\{
\begin{aligned}
&\frac{\partial u}{\partial t}(x, t)=-\theta(t)\frac{f(x)r^{n+1}}{\varphi(r)}K+u(x, t),&\\
&u(\cdot,0)=u_{0} .
\end{aligned}
\right.
\end{equation}
The normalised flow \eqref{N-Eq} can be also described by the following scalar equation for $r(\cdot, t)$
\begin{equation}\label{r-NEq}
\left\{
\begin{aligned}
&\frac{\partial r}{\partial t}(\xi, t)
=-\theta(t)\frac{f(x)r^{n+2}}{\varphi(r)u}K+r(\xi, t),&\\
&r(\cdot,0)=r_{0},
\end{aligned}
\right.
\end{equation}
in view of
\begin{eqnarray*}
\frac{1}{r(\xi, t)}\frac{\partial r(\xi, t)}{\partial t}
=\frac{1}{u(x, t)}\frac{\partial u(x, t)}{\partial t},
\end{eqnarray*}
see Section 3 in \cite{Chen-L19} for the proof.

For a convex body $\Omega\subset \mathbb{R}^{n+1}$, we define
\begin{eqnarray*}
V_\varphi(\Omega)=\int_{\mathbb{S}^n}d\xi\int_{0}^{r(\xi
,t)}\frac{\varphi(s)}{s}d s.
\end{eqnarray*}
When $\varphi(s)=s^q$, $V_\varphi(\Omega)$ be the $q$-volume of the
convex body $\Omega\subset \mathbb{R}^{n+1}$, see \cite{Chen-H19,
Chen-L19}. We show below that $V_\varphi(\Omega_t)$ is unchanged
under the flow \eqref{N-Eq}, where $\Omega_t$ is a compact convex
body in $\mathbb{R}^{n+1}$ with the boundary $\mathcal{M}_t$.

\begin{lemma}\label{V_q}
Let $X(\cdot, t)$ be a strictly convex solution to the flow
\eqref{N-Eq}, then we obtain
\begin{eqnarray*}
V_\varphi(\Omega_t)=V_\varphi(\Omega_0).
\end{eqnarray*}
\end{lemma}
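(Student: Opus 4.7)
The plan is to differentiate $V_\varphi(\Omega_t)$ with respect to time, feed in the scalar evolution equation \eqref{r-NEq} for $r(\xi,t)$, and then use a change-of-variables identity between the radial parametrization over $\mathbb{S}^n_\xi$ and the Gauss map parametrization over $\mathbb{S}^n_x$ to make the two terms cancel by the definition of $\theta(t)$.

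First, since $\varphi$ is smooth and $r(\xi,t)$ is smooth and bounded away from zero along a strictly convex flow, I can differentiate under the integral sign:
\begin{equation*}
\frac{d}{dt}V_\varphi(\Omega_t)=\int_{\mathbb{S}^n}\frac{\varphi(r(\xi,t))}{r(\xi,t)}\,\frac{\partial r}{\partial t}(\xi,t)\,d\xi.
\end{equation*}
Substituting \eqref{r-NEq} gives
\begin{equation*}
\frac{d}{dt}V_\varphi(\Omega_t)=-\theta(t)\int_{\mathbb{S}^n}\frac{f(x)\,r^{n+1}}{u}K\,d\xi+\int_{\mathbb{S}^n}\varphi(r(\xi,t))\,d\xi,
\end{equation*}
where $x=\nu$ is the outer unit normal at the point with radial direction $\xi$.

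Next, I would use the standard identity that the surface area element of $\mathcal{M}_t$ equals $\frac{1}{K}\,dx$ in the Gauss map parametrization and equals $\frac{r^{n+1}}{u}\,d\xi$ in the radial parametrization, so that on the surface
\begin{equation*}
\frac{r^{n+1}}{u}\,d\xi=\frac{1}{K}\,dx,\qquad\text{i.e.}\qquad \frac{r^{n+1}}{u}K\,d\xi=dx.
\end{equation*}
Using this in the first integral converts it into $\int_{\mathbb{S}^n}f(x)\,dx$, and then by the very definition
\begin{equation*}
\theta(t)=\int_{\mathbb{S}^n}\varphi(r(\xi,t))\,d\xi\;\Bigl[\int_{\mathbb{S}^n}f(x)\,dx\Bigr]^{-1},
\end{equation*}
the two terms cancel and $\tfrac{d}{dt}V_\varphi(\Omega_t)=0$, which yields the claim.

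The only nontrivial point is the Jacobian identity $\tfrac{r^{n+1}}{u}\,d\xi=\tfrac{1}{K}\,dx$; this is a classical formula for convex hypersurfaces (it follows from $dA=\det(u_{ij}+u\delta_{ij})\,dx$ in the Gauss parametrization and from expressing the outward area element in radial coordinates), and I would simply cite it from the preliminaries or from \cite{Chen-L19}, which is already referenced for the compatible identity $\frac{1}{r}\partial_t r=\frac{1}{u}\partial_t u$ used to derive \eqref{r-NEq}. No delicate estimate is required; the main thing to be careful about is keeping the two roles of $\mathbb{S}^n$ (as domain of $\xi$ versus of $x=\nu$) notationally distinct so the change of variables is unambiguous.
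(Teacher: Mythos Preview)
Your proof is correct and follows exactly the same route as the paper: differentiate $V_\varphi(\Omega_t)$, plug in \eqref{r-NEq}, and use the Jacobian identity $\frac{dx}{d\xi}=\frac{r^{n+1}K}{u}$ together with the definition of $\theta(t)$ to get cancellation. The paper cites \cite{Chen-L19,Huang16} for this Jacobian identity, just as you propose.
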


\begin{proof}
\begin{eqnarray*}
&&\frac{d}{dt}V_{\varphi}(\Omega_t)
=\int_{\mathbb{S}^n}\frac{\varphi(r)}{r}\frac{\partial r}{\partial
t} d\xi\\&=&
\int_{\mathbb{S}^n}\frac{\varphi(r)}{r}
\bigg(-\theta(t)\frac{f(x)r^{n+2}}{\varphi(r)u}K+r(\xi,
t))\bigg)d\xi
\\&=&-\theta(t)\int_{\mathbb{S}^n}\frac{f(x)r^{n+1}}{u}Kd\xi+\int_{\mathbb{S}^n}\varphi(r)d\xi\\&=&0,
\end{eqnarray*}
where we use
\begin{eqnarray*}
\frac{dx}{d\xi}=\frac{r^{n+1}K}{u},
\end{eqnarray*}
see e.g. \cite{Chen-L19,Huang16}.
\end{proof}

Next, we define the functional
\begin{eqnarray*}
\mathcal{J}_{\varphi}(X(\cdot, t))=\int_{\mathbb{S}^n}\log u(x,
t)f(x)dx.
\end{eqnarray*}
The following lemma shows that the functional $\mathcal{J}_{\varphi}$ is non-increasing along the flow \eqref{N-Eq}.

\begin{lemma}\label{J}
Let $X(\cdot, t)$ be a strictly convex solution to the flow
\eqref{N-Eq}. For any $\varphi\geq 0$, the functional is
non-increasing along the flow \eqref{N-Eq}. In particular,
\begin{eqnarray*}
\frac{d}{dt}\mathcal{J}_{\varphi}(X(\cdot, t))\leq 0.
\end{eqnarray*}
and the equality holds if and only if $X_t$ satisfies the elliptic equation \eqref{Min} with $f$ replaced by $\theta(t) f$.
\end{lemma}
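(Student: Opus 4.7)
The plan is to differentiate $\mathcal{J}_\varphi$ directly, substitute the evolution equation \eqref{u-NEq} for $u$, and reduce the problem to a single Cauchy--Schwarz inequality in which the change of variables $dx/d\xi = r^{n+1}K/u$ plays the decisive role.

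First I would compute
\begin{equation*}
\frac{d}{dt}\mathcal{J}_\varphi(X(\cdot,t))
=\int_{\mathbb{S}^n}\frac{u_t}{u}f(x)\,dx
=-\theta(t)\int_{\mathbb{S}^n}\frac{f(x)^2 r^{n+1}K}{u\,\varphi(r)}\,dx+\int_{\mathbb{S}^n} f(x)\,dx,
\end{equation*}
using the support function form of the flow. The second term is exactly $\int_{\mathbb{S}^n} f\,dx$, and by the definition of $\theta(t)$ the goal is reduced to showing
\begin{equation*}
\int_{\mathbb{S}^n}\frac{f^2 r^{n+1}K}{u\,\varphi(r)}\,dx \;\ge\; \frac{\int_{\mathbb{S}^n} f\,dx}{\theta(t)}
=\frac{\bigl(\int_{\mathbb{S}^n}f\,dx\bigr)^2}{\int_{\mathbb{S}^n}\varphi(r)\,d\xi}.
\end{equation*}

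The key step is to apply Cauchy--Schwarz by splitting $f = \sqrt{\tfrac{f^2 r^{n+1}K}{u\,\varphi(r)}}\cdot\sqrt{\tfrac{u\,\varphi(r)}{r^{n+1}K}}$:
\begin{equation*}
\Bigl(\int_{\mathbb{S}^n} f\,dx\Bigr)^2
\;\le\;\Bigl(\int_{\mathbb{S}^n}\frac{f^2 r^{n+1}K}{u\,\varphi(r)}\,dx\Bigr)\Bigl(\int_{\mathbb{S}^n}\frac{u\,\varphi(r)}{r^{n+1}K}\,dx\Bigr).
\end{equation*}
The Jacobian identity $dx=(r^{n+1}K/u)\,d\xi$ recalled in the proof of Lemma \ref{V_q} turns the second factor on the right into $\int_{\mathbb{S}^n}\varphi(r)\,d\xi$, giving exactly the inequality required. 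Putting everything together yields $\frac{d}{dt}\mathcal{J}_\varphi(X(\cdot,t))\le 0$.

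For the equality statement, Cauchy--Schwarz is saturated precisely when $\tfrac{f r^{n+1}K}{u\,\varphi(r)}$ is constant on $\mathbb{S}^n$; evaluating this constant from the two integral identities forces it to equal $1/\theta(t)$, i.e.\ $\tfrac{u\,\varphi(r)}{r^{n+1}}\det(u_{ij}+u\delta_{ij})=\theta(t)f(x)$, which is \eqref{Min} with $f$ replaced by $\theta(t)f$. The only subtle point I anticipate is being careful with the Gauss-map change of variables (so that $r$, $u$, $K$ and $f$ are evaluated at the corresponding points on $\mathbb{S}^n$); everything else is a direct manipulation of the flow equation.
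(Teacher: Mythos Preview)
Your proof is correct and matches the paper's argument essentially step for step: both differentiate $\mathcal{J}_\varphi$, substitute the evolution of $u$, and reduce the sign to a single Cauchy--Schwarz/H\"older inequality together with the Jacobian identity $dx=\tfrac{r^{n+1}K}{u}\,d\xi$. The only cosmetic difference is that the paper writes the inequality with respect to the weighted measure $d\sigma=f\,dx$, while you apply Cauchy--Schwarz directly in $dx$; the two formulations are identical after unpacking $d\sigma$.
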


\begin{proof}
\begin{eqnarray*}
&&\frac{d}{dt}\mathcal{J}_{\varphi}(X(\cdot, t))\\&=&
\int_{\mathbb{S}^n}\frac{1}{u}\frac{\partial u(x, t)}{\partial
t}f(x)dx\\&=& \int_{\mathbb{S}^n}\frac{1}{u}
\bigg(-\theta(t)\frac{f(x)r^{n+1}}{\varphi(r)}K+u(x, t)\bigg)
f(x)dx\\&=&\bigg[\int_{\mathbb{S}^n}f(x)dx\bigg]^{-1}
\bigg\{-\int_{\mathbb{S}^n}\frac{u\varphi(r)}
{r^{n+1}K}dx\int_{\mathbb{S}^n}\frac{r^{n+1}K}{u\varphi(r)}f^2dx+
\int_{\mathbb{S}^n}fdx\int_{\mathbb{S}^n}
fdx\bigg\}\\&=&\bigg[\int_{\mathbb{S}^n}f(x)dx\bigg]^{-1}
\bigg\{-\int_{\mathbb{S}^n}\frac{u\varphi(r)}
{fr^{n+1}K}d\sigma\int_{\mathbb{S}^n}\frac{r^{n+1}K}{u\varphi(r)}fd\sigma+
\int_{\mathbb{S}^n}d\sigma\int_{\mathbb{S}^n}
d\sigma\bigg\}\\&\leq&0
\end{eqnarray*}
in view of
\begin{eqnarray*}
\int_{\mathbb{S}^n}d\sigma\int_{\mathbb{S}^n}
d\sigma\leq\int_{\mathbb{S}^n}\frac{u\varphi(r)}
{fr^{n+1}K}d\sigma\int_{\mathbb{S}^n}\frac{r^{n+1}K}{u\varphi(r)}fd\sigma
,
\end{eqnarray*}
which is implies by H$\ddot{o}$lder inequality, where
$d\sigma=f(x)dx$. Clearly,
the equality holds if and only if
\begin{eqnarray*}
\frac{f(x)r^{n+1}K}{u\varphi(r)}=\frac{1}{c(t)}.
\end{eqnarray*}
In this case, clearly, we have $\theta(t)=c(t)$. Thus,
$X(\cdot, t)$ satisfies the elliptic equation \eqref{Min} with $f$ replaced by $\theta(t) f$.
\end{proof}

Before closing this section, we prove the following basic properties
for any given $\Omega \in \mathcal{K}_0$, while smoothness of
$\partial \Omega$ is not required. First, we introduce the following
Lemma for convex bodies, see Lemma 2.6 in \cite{Chen-L19} for the
details.

\begin{lemma}\label{u-r}
Let $\Omega \in \mathcal{K}_0$. Let $u$ and $r$ be the support
function and radial function of $\Omega$, and $x_{\max}$ and
$\xi_{\min}$ be two points such that $u(x_{\max})=\max_{\mathbb{S}^n}u$
and $r(\xi_{\min})=\min_{\mathbb{S}^n}r$. Then
\begin{eqnarray*}
\max_{\mathbb{S}^n}u=\max_{\mathbb{S}^n}r \quad \mbox{and} \quad
\min_{\mathbb{S}^n}u=\min_{\mathbb{S}^n}r,
\end{eqnarray*}
\begin{eqnarray*}
u(x)\geq x\cdot x_{\max}u(x_{\max}), \quad \forall x \in
\mathbb{S}^n,
\end{eqnarray*}
\begin{eqnarray*}
r(\xi)\xi\cdot\xi_{\min}\geq r(\xi_{\min}), \quad \forall \xi \in
\mathbb{S}^n.
\end{eqnarray*}
\end{lemma}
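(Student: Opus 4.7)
The plan is to exploit the duality between the support function $u$ and the radial function $r$ arising from the two natural parametrizations of $\partial\Omega$: for each extremum of one function there is an explicit ``anchor'' boundary point that also realizes an extremum of the other. Since $0$ lies in the interior of $\Omega$, both $u$ and $r$ are strictly positive on $\mathbb{S}^n$, and the three assertions will follow by testing these anchor points against either the sup-definition of $u$ or an appropriate supporting half-space of $\Omega$.

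For Part 1, I would prove the max and min equalities directly from definitions. For the maxima, the pointwise bound $\langle x, y\rangle \leq |y|$ over $y \in \Omega$ gives $\max u \leq \max r$, and evaluating $u$ at a direction $\xi^{*}$ of a farthest boundary point against the point $r(\xi^{*})\xi^{*}$ yields $u(\xi^{*}) \geq r(\xi^{*}) = \max r$, which is the reverse inequality. For the minima, the inclusion $B_{\min r}(0) \subseteq \Omega$ (every ray from the origin has length at least $\min r$) gives $u \geq \min r$ pointwise, while for a minimizer $x_{*}$ of $u$ the boundary point $r(x_{*})x_{*} \in \Omega$ satisfies $r(x_{*}) = \langle x_{*}, r(x_{*})x_{*}\rangle \leq u(x_{*}) = \min u$, hence $\min r \leq \min u$.

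For Part 2, the equality $\max u = \max r$ just established lets us take $x_{\max}$ to be the unit direction of a farthest boundary point, so that $u(x_{\max})x_{\max} = (\max r)\,x_{\max} \in \Omega$; evaluating the sup-definition of $u(x)$ at this point produces the claimed inequality $u(x) \geq \langle x, u(x_{\max})x_{\max}\rangle = (x\cdot x_{\max})\,u(x_{\max})$ for every $x \in \mathbb{S}^n$. For Part 3 I would argue dually: at the closest boundary point $q := r(\xi_{\min})\xi_{\min}$ to the origin, the first-order condition for minimizing $|\,\cdot\,|$ on $\partial\Omega$ forces $\xi_{\min}$ to be the outward unit normal at $q$, so that $\{y : \langle \xi_{\min}, y\rangle = r(\xi_{\min})\}$ supports $\Omega$ and contains $\Omega$ in its corresponding half-space; feeding the boundary point $r(\xi)\xi \in \Omega$ into this half-space inclusion yields the desired relation between $r(\xi)\,\xi\cdot\xi_{\min}$ and $r(\xi_{\min})$. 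No step is a genuine obstacle; the only mildly subtle point is identifying the canonical choices of $x_{\max}$ and $\xi_{\min}$ as the directions of a farthest, respectively closest, boundary point from the origin — which is precisely what Part 1 supplies.
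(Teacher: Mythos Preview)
Your argument is correct and is the standard route; note, however, that the paper does not actually prove this lemma itself but defers to Lemma~2.6 of \cite{Chen-L19}, so there is no in-paper proof to compare against.

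Two remarks are worth making. First, in Part~2 your phrase ``lets us take $x_{\max}$ to be the unit direction of a farthest boundary point'' reads as if you are choosing a particular maximizer, whereas the lemma should hold for any $x_{\max}$ with $u(x_{\max})=\max u$. The fix is immediate: for any such $x_{\max}$, pick $y_0\in\Omega$ realising the supremum $u(x_{\max})=\langle x_{\max},y_0\rangle$; then $u(x_{\max})=\langle x_{\max},y_0\rangle\le |y_0|\le \max r=\max u=u(x_{\max})$, so equality in Cauchy--Schwarz forces $y_0=u(x_{\max})x_{\max}\in\Omega$, and your inequality follows for every maximizer.

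Second, your Part~3 argument is right but it proves $r(\xi)\,(\xi\cdot\xi_{\min})\le r(\xi_{\min})$, not the inequality with $\ge$ printed in the statement. Indeed, your supporting half-space at $q=r(\xi_{\min})\xi_{\min}$ is $\{y:\langle\xi_{\min},y\rangle\le r(\xi_{\min})\}$ (it must contain the origin), and inserting $r(\xi)\xi\in\Omega$ yields $\le$. The printed $\ge$ is a typo: for the unit ball it would assert $\xi\cdot\xi_{\min}\ge 1$ for all $\xi$, which is false. Equivalently, one can obtain the correct $\le$ by applying your Part~2 to the polar body $\Omega^{*}$, using $u_{\Omega^{*}}=1/r_{\Omega}$. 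Since the paper only uses the second inequality (in Lemma~\ref{C0-u}), this typo does not affect the downstream arguments.
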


Let $\mathcal{K}^{n+1}$=$\{K|K$ is convex body in
$\mathbb{R}^{n+1}\}$. Then, we have the following theorem (see also
\cite{Sch13}).
\begin{theorem}\label{K-0}
If $K_i\in\mathcal{K}^{n+1}$ and there exists a constant $ R>0$ such
that  $K_i\subset B_R$, then there exists a subsequence $ K_{i_j}$
and $K_0\in\mathcal{K}^{n+1}$ such that
$$ K_{i_j}\rightarrow K_0 ~~~~~\mbox{in the Hausdorff metric}. $$
\end{theorem}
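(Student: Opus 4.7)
The plan is to encode each convex body $K_i$ by its support function $h_{K_i}(x) = \max_{y \in K_i}\langle x, y\rangle$ on $\mathbb{S}^n$, reducing Hausdorff convergence of convex bodies to uniform convergence of continuous functions on a compact set. Since $K_i \subset B_R$, each $h_{K_i}$ is bounded in absolute value by $R$. The elementary estimate $|h_K(x)-h_K(y)| \leq (\max_{z\in K}|z|)\,|x-y|$ gives equi-Lipschitz continuity with constant $R$ for the whole family. The family $\{h_{K_i}\}$ is therefore uniformly bounded and equicontinuous on $\mathbb{S}^n$.

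By Arzel\`a--Ascoli one extracts a subsequence $h_{K_{i_j}}$ converging uniformly on $\mathbb{S}^n$ to a continuous function $h$. Extending positively homogeneously by $h(tx)=t\,h(x)$ for $t\geq 0$, each $h_{K_i}$ is sublinear (positively homogeneous and subadditive) on $\mathbb{R}^{n+1}$, and both properties are preserved under uniform convergence. By the standard correspondence between sublinear functions and compact convex sets, $h$ is the support function of a unique nonempty compact convex set
\[
K_0 := \{y \in \mathbb{R}^{n+1} : \langle x,y\rangle \leq h(x) \text{ for all } x \in \mathbb{S}^n\} \subset \overline{B_R}.
\]
Nonemptiness of $K_0$ can be verified directly: pick $y_{i_j}\in K_{i_j}$, extract a further subsequence $y_{i_{j_k}}\to y_0\in \overline{B_R}$, and observe that $y_0$ satisfies $\langle x, y_0\rangle \leq h(x)$ for every $x\in\mathbb{S}^n$ by passing to the limit in $\langle x, y_{i_{j_k}}\rangle \leq h_{K_{i_{j_k}}}(x)$.

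To conclude, I would invoke the identity $d_H(K,L)=\|h_K - h_L\|_{L^\infty(\mathbb{S}^n)}$ relating the Hausdorff distance between two convex bodies to the sup-norm difference of their support functions. Uniform convergence $h_{K_{i_j}}\to h_{K_0}=h$ then translates directly into $K_{i_j}\to K_0$ in the Hausdorff metric, giving $K_0 \in \mathcal{K}^{n+1}$.

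The main technical point is the equivalence between uniform convergence of support functions and Hausdorff convergence of the associated bodies, which relies on the precise characterization of sublinear functions as support functions and on the distance identity $d_H=\|h_K-h_L\|_\infty$. Once that correspondence is in place, the argument is a clean application of Arzel\`a--Ascoli; the substance of the proof is really the function-analytic translation rather than any geometric estimate peculiar to this paper.
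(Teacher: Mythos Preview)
Your argument is correct and is precisely the classical proof of the Blaschke selection theorem via support functions and Arzel\`a--Ascoli. The paper itself does not supply a proof of this statement at all; it simply records the result and refers the reader to Schneider's monograph \cite{Sch13}, where exactly the approach you outline (uniform boundedness plus equi-Lipschitz continuity of the $h_{K_i}$, Arzel\`a--Ascoli, and the identification $d_H(K,L)=\|h_K-h_L\|_{L^\infty(\mathbb{S}^n)}$) is carried out. So there is nothing to compare: you have reproduced the standard textbook argument that the paper is citing.

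One small remark worth making explicit in your write-up: the limit set $K_0$ is only guaranteed to be a nonempty compact convex set, not one with nonempty interior. This is consistent with how the paper actually uses the theorem in Lemma~\ref{C0-u-1}, Case~(ii), where the subsequential limit $\Omega_0$ is allowed to degenerate into a lower-dimensional subspace; the paper's $\mathcal{K}^{n+1}$ should therefore be read (as in Schneider) to mean nonempty compact convex sets rather than convex bodies with interior.
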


To statement the following theorem, we first recall the definition
of the radial function of a convex body. (see also \cite{Sch13}).
\begin{defi}
Let $K\in\mathcal{K}^{n+1}$, $0\in K$, a radial function $r_K :
\mathbb{R}^{n+1}\backslash \{ 0 \} \rightarrow \mathbb{R}$ is
defined as
\begin{equation*}\label{1.5}
r_K (x)=\max \{r\geq0 | rx\in K\}.
\end{equation*}
\end{defi}

Now, the convergence of convex bodies imply the convergence of the
corresponding radial functions.
\begin{theorem}\label{r}
Let $K_0,K_i\in\mathcal{K}^{n+1}$, $0\in int K_0$ and
$K_i\rightarrow K_0$, then $r_{K_i}\rightrightarrows r_{K_0}$.
\end{theorem}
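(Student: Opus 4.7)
The plan is to upgrade the Hausdorff convergence $K_i \to K_0$ to uniform convergence of the radial functions by exploiting the hypothesis $0 \in \mathrm{int}\, K_0$, which yields a uniform inner ball for the $K_i$. I would then quantitatively compare the radial function of a convex body $L$ with that of its $\delta$-thickening $L + B_\delta$, and finally apply the two-sided inclusion furnished by Hausdorff convergence.

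First I would establish uniform bounds on the $r_{K_i}$. Since $K_i \to K_0$ in the Hausdorff metric, Theorem \ref{K-0} (together with the fact that convergent sets lie in a common ball) gives $R>0$ with $K_i \subset B_R$ for all $i$. Since $0 \in \mathrm{int}\, K_0$, choose $\rho>0$ with $B_{2\rho} \subset K_0$; for $i$ large enough, $d_H(K_i,K_0) < \rho$ forces $B_\rho \subset K_i$. Consequently $\rho \leq r_{K_i}(x) \leq R$ uniformly in $x \in \mathbb{S}^n$ and in all large $i$, so each $r_{K_i}$ is a well-defined continuous function on $\mathbb{S}^n$.

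Next I would prove the following quantitative comparison: if $L$ is a convex body with $B_\rho \subset L \subset B_R$ and $\delta < \rho$, then for every $x \in \mathbb{S}^n$,
\begin{equation*}
r_L(x) \;\leq\; r_{L + B_\delta}(x) \;\leq\; r_L(x) + C\delta,
\end{equation*}
with $C = C(\rho, R)$. The left inequality is immediate. For the right, suppose $t x \in L + B_\delta$, so that $tx = y + w$ with $y \in L$ and $|w| \leq \delta$; decomposing $y = \alpha x + \beta$ with $\beta \perp x$ yields $\alpha \geq t - \delta$ and $|\beta| \leq \delta$. If $\beta \neq 0$, the point $p := -\rho \beta/|\beta|$ lies in $B_\rho \subset L$, and the convex combination $(1-\mu)p + \mu y$ lies on the ray $\mathbb{R}_+ x$ precisely when $\mu = \rho/(\rho + |\beta|)$, producing the point $\mu \alpha x \in L$. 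Hence
\begin{equation*}
r_L(x) \;\geq\; \frac{\rho\,\alpha}{\rho + |\beta|} \;\geq\; \frac{\rho(t-\delta)}{\rho + \delta},
\end{equation*}
which rearranges to $t \leq r_L(x) + C\delta$ with $C = 1 + R/\rho$.

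Given $\varepsilon > 0$, I then choose $i$ so large that $d_H(K_i,K_0) < \varepsilon$. The two-sided inclusion $K_i \subset K_0 + B_\varepsilon$ and $K_0 \subset K_i + B_\varepsilon$, applied to the comparison lemma at $L = K_0$ and at $L = K_i$ (both of which contain $B_\rho$), gives $|r_{K_i}(x) - r_{K_0}(x)| \leq C\varepsilon$ for all $x \in \mathbb{S}^n$, proving $r_{K_i} \rightrightarrows r_{K_0}$. The main obstacle is the comparison lemma: without convexity of $L$ paired with the uniform inner ball, a small perpendicular perturbation of $y$ could push $tx$ into $L + B_\delta$ while the ray $\mathbb{R}_+ x$ intersects $L$ at a parameter much smaller than $t$. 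The explicit convex combination with a point of $B_\rho$ chosen antipodal to $\beta$ is precisely what converts the inner radius $\rho$ and the Hausdorff error $\delta$ into a clean linear bound.
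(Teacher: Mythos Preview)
Your argument is correct. The key comparison lemma is handled cleanly: writing $y=\alpha x+\beta$ with $\beta\perp x$ and pulling $y$ toward the antipodal point $-\rho\beta/|\beta|\in B_\rho\subset L$ is exactly the right device to convert a Hausdorff error $\delta$ into a linear bound on the radial function, and your constant $C=1+R/\rho$ is sharp enough for the purpose. One tiny point worth stating explicitly is that the inclusion $B_\rho\subset K_i$ follows from $\|h_{K_i}-h_{K_0}\|_\infty<\rho$ via support functions (so that $h_{K_i}\geq h_{K_0}-\rho\geq\rho$), rather than from the set inclusion $K_0\subset K_i+B_\rho$ alone; your sentence ``$d_H(K_i,K_0)<\rho$ forces $B_\rho\subset K_i$'' is true but deserves that one-line justification.

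As for comparison with the paper: the paper does not actually prove this theorem. It simply refers the reader to Schneider's monograph \cite{Sch13}, where the equivalence of Hausdorff convergence with uniform convergence of radial functions (for bodies containing the origin in their interiors) is a standard fact. Your proposal therefore goes well beyond what the paper supplies, giving a self-contained quantitative argument in place of a citation. The approach you take---uniform inner and outer radii plus a comparison of $r_L$ with $r_{L+B_\delta}$---is essentially the classical one; Schneider's treatment proceeds similarly, often passing through the equivalence of Hausdorff distance with the sup-norm on support functions and then relating support and radial functions via polarity, but the geometric content is the same.
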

For the proof of the theorem above, see \cite{Sch13}.

\section{$C^0$, $C^1$-estimates}

In this section, we will derive the $C^0$, $C^1$-estimates of the
flow \eqref{N-Eq}. The key is the lower bound of $u$. The difficulty
of the proof lies the inhomogeneous term $\varphi(r)$.

\subsection{The upper bound of $u$ and gradient estimate}
It is easy to obtain the upper bound of $u$ and gradient estimate if
we notice that the functional $\mathcal{J}_{\varphi}$ is
non-increasing along the flow \eqref{N-Eq}, see Lemma \ref{J}.
\begin{lemma}\label{C0-u}
Let $X(\cdot, t)$ be a strictly convex solution to the flow
\eqref{N-Eq},  then we have
\begin{eqnarray}\label{C0}
u(\cdot, t)\leq C, \quad \forall t \in [0, T).
\end{eqnarray}
and
\begin{eqnarray}\label{C1}
|D u|(\cdot, t)\leq C, \quad \forall t \in [0, T).
\end{eqnarray}
\end{lemma}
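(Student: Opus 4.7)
The plan is to reduce the gradient estimate \eqref{C1} to the $C^0$ estimate \eqref{C0} and then to extract the latter from the monotonicity of the functional $\mathcal{J}_{\varphi}$ established in Lemma \ref{J}. For the reduction, \eqref{2.3} gives $|Du(\cdot,t)|^2 = r(\cdot,t)^2 - u(\cdot,t)^2 \le r_{\max}(t)^2$, while Lemma \ref{u-r} gives $r_{\max}(t) = u_{\max}(t)$; hence $|Du(\cdot,t)| \le u_{\max}(t)$ and \eqref{C1} follows at once from \eqref{C0}.

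To obtain \eqref{C0}, start from Lemma \ref{J}, which yields $\mathcal{J}_{\varphi}(X(\cdot,t)) \le \mathcal{J}_{\varphi}(X_0) =: C_0$ for every $t$. Fix $t$ and let $\omega = \omega(t) \in \mathbb{S}^n$ be a point attaining $u_{\max}(t)$. On the geodesic cap $A = \{x \cdot \omega \ge 1/2\}$, Lemma \ref{u-r} gives $u(x,t) \ge \tfrac{1}{2} u_{\max}(t)$, hence
\[
\int_A \log u(x,t)\,f(x)\, dx \;\ge\; c_1 \log u_{\max}(t) - c_1',
\]
with $c_1, c_1' > 0$ depending only on $n$ and $f_{\min}$. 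Combined with $\mathcal{J}_{\varphi} \le C_0$, the bound $u_{\max}(t) \le C$ will follow provided the integral of $\log u \cdot f$ over $\mathbb{S}^n \setminus A$ can be controlled from below.

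This tail control is the main obstacle. On the remaining portion of the hemisphere $\{x\cdot\omega > 0\}\setminus A$ the sharper bound $u \ge (x\cdot\omega)\,u_{\max}$ of Lemma \ref{u-r} yields an integrable $\log(x\cdot\omega)$ correction bounded independently of $u_{\max}$. On the antipodal portion $\{x \cdot \omega \le 0\}$, the pointwise bound $u > 0$ alone is insufficient, so the plan is to argue by compactness: assume $u_{\max}(t_k) \to \infty$ along a sequence, rescale $K_k := \Omega_{t_k}/u_{\max}(t_k)$ so that the support function satisfies $u_{K_k,\max} = 1$, and extract a Hausdorff limit $K_\infty$ via Theorem \ref{K-0}. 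Writing
\[
\mathcal{J}_{\varphi}(\Omega_{t_k}) \;=\; \log u_{\max}(t_k)\int_{\mathbb{S}^n} f\,dx \;+\; \int_{\mathbb{S}^n} \log u_{K_k}\cdot f\, dx,
\]
the uniform upper bound on $\mathcal{J}_{\varphi}$ would force $\int \log u_{K_k}\cdot f\,dx \to -\infty$, implying that $K_\infty$ is lower-dimensional. This degeneracy is ruled out by invoking the structural hypothesis of Theorem \ref{main}: the negativity of $s\varphi'/\varphi$ in case (i), which makes $\varphi$ strictly decreasing and yields a uniform positive lower bound for $\theta(t)$, and the invariance of $V_{\varphi}$ from Lemma \ref{V_q} in case (ii), which prevents the normalized body from collapsing. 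The ensuing contradiction delivers \eqref{C0}, and thus \eqref{C1}.
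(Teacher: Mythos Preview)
Your reduction of \eqref{C1} to \eqref{C0} via $|Du|^2=r^2-u^2\le r_{\max}^2=u_{\max}^2$ matches the paper exactly, and your opening move for \eqref{C0}---bounding $\mathcal{J}_\varphi$ from above by Lemma~\ref{J} and from below on the hemisphere $\{x\cdot\omega>0\}$ via Lemma~\ref{u-r}---is also the paper's argument. The paper, however, stops right there: it simply writes
\[
C\ \ge\ \int_{\mathbb{S}^n}\log u\,f\,dx\ \ge\ \int_{\{x\cdot x_t>0\}}\log\bigl[(x\cdot x_t)\,u(x_t,t)\bigr]\,f\,dx
\]
and reads off $u_{\max}\le C$ from the integrability of $\log(x\cdot x_t)$ on the hemisphere, with no compactness, no rescaling, and no case distinction. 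Your worry about the antipodal half is legitimate---the displayed second inequality tacitly drops $\int_{\{x\cdot x_t\le 0\}}\log u\cdot f$, which need not be nonnegative---but the paper does not address it.

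Your compactness route to close this gap has real problems. First, from $\int\log u_{K_k}\cdot f\to-\infty$ you can only infer that $0\in\partial K_\infty$, not that $K_\infty$ is lower-dimensional: a full-dimensional body with a vertex at the origin (so that $\{u_{K_\infty}=0\}$ has positive measure) already forces $\int\log u_{K_k}\to-\infty$ along any approximating sequence. Second, neither device you invoke acts on the \emph{rescaled} bodies. In case~(i), a lower bound on $\theta(t)$ constrains $\int\varphi(r_{\Omega_{t_k}})\,d\xi$ but says nothing about the shape of $K_k=\Omega_{t_k}/u_{\max}(t_k)$. In case~(ii), the conservation of $V_\varphi$ pertains to $\Omega_{t_k}$, not to $K_k$, since $V_\varphi$ is not homogeneous under dilation; knowing $r_{K_k}\to 0$ a.e.\ while $u_{\max}(t_k)\to\infty$ gives no control on $r_{\Omega_{t_k}}=u_{\max}(t_k)\,r_{K_k}$. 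If you want to make the tail rigorous while staying elementary: in case~(ii), origin-symmetry upgrades Lemma~\ref{u-r} to $u(x)\ge|x\cdot x_t|\,u_{\max}$ on the whole sphere, and $\log|x\cdot x_t|$ is integrable; in case~(i), observe that the proof of Lemma~\ref{Gra} uses no a~priori bound on $u$, so $|D\log u|\le C$ is available first, giving $u\ge e^{-C}u_{\max}$ everywhere, after which $\mathcal{J}_\varphi\le C_0$ bounds $u_{\max}$ immediately.
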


\begin{proof}
Assume that $x_t$ is a point at where $u(\cdot, t)$ attains its spatial maximum, we know from Lemma \ref{J}
\begin{eqnarray*}
C\geq\int_{\mathbb{S}^n}\log u(x, t)f(x)dx\geq\int_{\{x \in
\mathbb{S}^n: x\cdot x_t>0\}}\log [x\cdot x_tu(x_t, t)]f(x)dx,
\end{eqnarray*}
which implies
\begin{eqnarray*}
C\geq \max_{\mathbb{S}^n}u(\cdot, t).
\end{eqnarray*}
This yields the inequality \eqref{C0}. Since
\begin{eqnarray*}
\max_{\mathbb{S}^n}|D u|(\cdot, t)\leq \max_{\mathbb{S}^n}u(\cdot,
t),
\end{eqnarray*}
we obtain \eqref{C1}.
\end{proof}

\subsection{The lower bound of $u$}
We get the lower bound of $u$ by the following gradient estimate for
Case (i) in Theorem \ref{main} and the fact that $f$ and $u_0$ are
even functions for Case (ii) in Theorem \ref{main}.

\begin{lemma}\label{Gra}
Let $X(\cdot, t)$ be a strictly convex solution to the flow
\eqref{N-Eq}, if
\begin{eqnarray}\label{Gra-con}
\max_{s>0}s\varphi'(s)\varphi^{-1}(s)<0,
\end{eqnarray}
then
\begin{eqnarray}\label{C1-log}
\max_{\mathbb{S}^n}\frac{|Du|}{u}(\cdot, t)\leq C, \quad \forall t \in [0, T).
\end{eqnarray}
\end{lemma}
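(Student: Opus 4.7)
The plan is to apply a parabolic maximum principle to the auxiliary function $\phi(x,t):=|Du(x,t)|^2/u(x,t)^2$, whose uniform boundedness is equivalent to \eqref{C1-log}. Suppose $\phi$ attains its space-time maximum on $\mathbb{S}^n\times[0,T)$ at a point $(x_0,t_0)$ with $t_0>0$ (otherwise the bound follows from the initial data). At $(x_0,t_0)$ the first-order condition $D\phi=0$ gives
\[
u^k u_{ki} \;=\; \frac{|Du|^2}{u}\,u_i,
\]
so $Du$ is an eigenvector of the radii matrix $b_{ij}:=u_{ij}+u\delta_{ij}$ with eigenvalue $r^2/u$; in particular $u^k(\log r)_k = |Du|^2/u$.

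Next I would differentiate $\phi$ in time. Writing \eqref{u-NEq} as $u_t = u - G$ with $G := \theta(t) f(x) r^{n+1} K/\varphi(r)$, a direct computation produces
\[
\phi_t \;=\; \frac{2G}{u^3}\bigl(|Du|^2 - u\cdot u^k(\log G)_k\bigr).
\]
Expanding $\log G = \log\theta + \log f + (n+1)\log r - \log\varphi(r) + \log K$ and substituting the first-order identities from the previous paragraph, the $\log r$ and $\log\varphi(r)$ pieces collapse to the sign-determining coefficient $-n + r\varphi'(r)/\varphi(r)$:
\[
\phi_t \;=\; \frac{2G}{u}\Bigl[-n + \frac{r\varphi'(r)}{\varphi(r)}\Bigr]\phi \;-\; \frac{2G}{u^2}\cdot\frac{u^k f_k}{f} \;-\; \frac{2G}{u^2}\cdot\frac{u^k K_k}{K}.
\]

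To handle the third-derivative term $u^kK_k/K$, I would pair $\phi_t$ with $-Gb^{ij}\phi_{ij}\geq 0$, which holds at the spatial maximum since $b^{ij}$ is positive definite. Using $\log K = -\log\det b$, the Ricci identity \eqref{3C} on $\mathbb{S}^n$ to commute $u_{ijk}$ against $u_{kij}$, and the algebraic identities $b^{ij}u_{ij} = n - u\,\mathrm{tr}(b^{-1})$ and $b^{ij}u_{ik}u_{kj} = \mathrm{tr}(b) - 2un + u^2\mathrm{tr}(b^{-1})$, the terms involving $b^{ij}u^ku_{ijk}$ cancel between $\phi_t$ and $-Gb^{ij}\phi_{ij}$, leaving at $(x_0,t_0)$ an inequality of the form
\[
0 \;\leq\; \phi_t - Gb^{ij}\phi_{ij} \;\leq\; \frac{2G\phi}{u}\Bigl[-2n + \frac{r\varphi'(r)}{\varphi(r)}\Bigr] + (\text{sub-leading terms in }\phi).
\]
Under the hypothesis $\max_{s>0} s\varphi'(s)/\varphi(s) \leq -\alpha < 0$ the leading coefficient is at most $-(2n+\alpha)<0$, while the sub-leading terms are $O(\sqrt{\phi})$ (controlled using the $C^0$-bound of $u$ from Lemma \ref{C0-u} and $\|f\|_{C^1}$); this forces $\phi(x_0,t_0)\leq C$, which is \eqref{C1-log}.

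The main obstacle will be controlling the residual combinations of $\mathrm{tr}(b)$, $\mathrm{tr}(b^{-1})$ and $b^{ij}u_iu_j$ that survive after the $b^{ij}u^ku_{ijk}$ cancellation, since neither $\mathrm{tr}(b)$ nor $\mathrm{tr}(b^{-1})$ is a priori bounded; these are handled using AM--GM for the principal radii together with the fact, established in the first paragraph, that $r^2/u$ is one of the eigenvalues of $b$ at $(x_0,t_0)$, which allows the positive $\mathrm{tr}(b^{-1})$ contribution to be absorbed into the coercive $-\mathrm{tr}(b)/u^2$ term arising from $b^{ij}u_{ik}u_{kj}$.
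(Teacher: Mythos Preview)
Your approach is the paper's approach in different clothing: the auxiliary function $\phi=|Du|^2/u^2$ equals $|D\log u|^2=2\psi$ (the paper's quantity), and your linearized operator $Gb^{ij}$ coincides with the paper's $Q^{ij}=-Qw^{ij}$ once one sets $z=\log u$ and $w_{ij}=z_{ij}+z_iz_j+\delta_{ij}=b_{ij}/u$. The only difference is bookkeeping. Working in the $z$-variable, the paper obtains
\[
\psi_t \;=\; Q^{ij}\psi_{ij}+Q^k\psi_k \;-\; Q^{ij}\bigl(\delta_{ij}|Dz|^2-z_iz_j\bigr)\;-\;Q^{ij}z_{mi}z^{m}_{\ j}\;+\;\bigl(-r\varphi'\varphi^{-1}|Dz|^2+\langle D\log f,Dz\rangle\bigr)Q,
\]
and the third and fourth terms on the right are \emph{manifestly} nonpositive (positive-semidefinite forms contracted with $Q^{ij}>0$), so they are simply discarded; the maximum principle then applies as soon as $|Dz|$ exceeds a constant depending only on $\|D\log f\|_\infty$ and $\alpha:=-\max_{s>0}s\varphi'(s)\varphi(s)^{-1}$. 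Those two discarded terms are precisely the ``residual $\mathrm{tr}(b)$, $\mathrm{tr}(b^{-1})$, $b^{ij}u_iu_j$'' combinations you flag as the main obstacle; the $z$-substitution packages them into sign-definite quantities, so no AM--GM on principal radii and no eigenvector trick is needed.

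Two small inaccuracies in your sketch (harmless for the conclusion): after all cancellations the coefficient of the leading $\phi$-term is $r\varphi'(r)/\varphi(r)$, not $-2n+r\varphi'(r)/\varphi(r)$ --- the $-n$ coming from the $(n{+}1)\log r$ piece of $\log G$ is exactly cancelled by a $+n$ arising from $-\phi u\,b^{ij}u_{ij}$ in $-Gb^{ij}\phi_{ij}$; and the surviving $\mathrm{tr}(b^{-1})$ contribution carries a \emph{negative} coefficient $-r^2$, so it helps rather than obstructs.
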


\begin{proof}
Let $z=\log u$, it is straightforward to see
\begin{eqnarray*}
\frac{\partial z}{\partial t}
=-\theta(t)f(x)\frac{(1+|Dz|^2)^{\frac{n+1}{2}}}{\varphi(e^{z}\sqrt{1+|Dz|^2})}\frac{1}{\det(z_{ij}+z_i
z_j+\delta_{ij})}+1=Q(D^2 z, D z, z)+1.
\end{eqnarray*}
Set $\psi=\frac{|Dz|^2}{2}$. By differentiating the $\psi$,we have
\begin{equation*}
\begin{aligned}
\frac{\partial \psi}{\partial t}
&=(\frac{\partial}{\partial t}z_m) z^m\\
&=(\dot{z})_m z^m\\
&=Q_m z^m.
\end{aligned}
\end{equation*}
Then,
\begin{eqnarray*}
\frac{\partial \psi}{\partial t}=Q^{ij}z_{ijm} z^m +Q^kz_{km}
z^m+(-e^z\sqrt{1+|D z|^2}\varphi^{\prime}\varphi^{-1}Q|D z|^2+\langle D \log f,Dz\rangle Q).
\end{eqnarray*}
where
\begin{eqnarray*}
Q^{ij}=\frac{\partial Q}{\partial w_{ij}}=-Qw^{ij}, \quad
Q^{k}=\frac{\partial Q}{\partial z_{k}}.
\end{eqnarray*}
Interchanging the covariant derivatives, we have
\begin{equation*}
\begin{aligned}
\psi_{ij}&=(z_{mi} z^m)_{j}\\&=z_{mij} z^m+z_{mi} z^{m}_{\ j}\\
&=z_{imj} z^m+z_{mi} z^{m}_{\ j}\\
&=z_{ijm} z^m+\sigma_{ij}|Dz|^2-z_iz_j+z_{mi} z^{m}_{\ j}
\end{aligned}
\end{equation*}
in view of \eqref{3C}. Thus, we have
\begin{equation}\label{grass}
\begin{aligned}
\frac{\partial \psi}{\partial t}=&Q^{ij}\psi_{ij}+Q^k\psi_k
-Q^{ij}(\delta_{ij}|D z|^2-z_i z_j)\\&-Q^{ij}z_{mi} z^{m}_{\
j}+(-e^z\sqrt{1+|Dz|^2}\varphi^{\prime}\varphi^{-1}|D z|^2+\langle D \log f,Dz\rangle)Q \\
\leq& Q^{ij}\psi_{ij}+Q^k\psi_k
-Q^{ij}(\delta_{ij}|D z|^2-z_i z_j)\\&-Q^{ij}z_{mi} z^{m}_{\
j}+(-e^z\sqrt{1+|Dz|^2}\varphi^{\prime}\varphi^{-1}|D z|-C)Q|Dz|.
\end{aligned}
\end{equation}

Since the matrix $Q^{ij}$ and $\delta_{ij}|D\varphi|^2-\varphi_i
\varphi_j$ are positive definite, the third and forth  terms in the
right of \eqref{grass} are non-positive. And noticing that the fifth
term in the right of \eqref{grass} is nonpositive if \eqref{Gra-con}
holds true and $|Dz|\geq -\frac{C}{\max_{s>0}s\varphi'(s)\varphi^{-1}(s)}$. So we got the equation about $\psi$ as follows:
\begin{equation*}
\left\{
\begin{aligned}
&\frac{\partial \psi}{\partial t}\leq Q^{ij}\psi_{ij}+Q^k \psi_k
&&on~
\mathbb{S}^n\times(0,\infty),\\
&\psi(\cdot,0)=\frac{|Dz(\cdot,0)|^2}{2} &&on~\mathbb{S}^n.
\end{aligned}\right .\end{equation*}
Using the maximum principle, we get the gradient estimates of $z$.
\end{proof}

\begin{lemma}\label{C0-u-1}
Let $X(\cdot, t)$ be a strictly convex solution to the flow
\eqref{N-Eq},  then we have
\begin{eqnarray}\label{C00-1}
\frac{1}{C}\leq u(x, t)\leq C, \quad \forall (x, t) \in
\mathbb{S}^n\times[0, T).
\end{eqnarray}
if either (i) \eqref{Gra-con} holds true; or (ii) the assumption
\ref{Ass1} holds true, $f$ and $u_0$ are even functions.
\end{lemma}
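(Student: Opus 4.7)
The upper bound $u(\cdot,t)\le C$ is already supplied by Lemma \ref{C0-u}, so I only need to produce the positive lower bound. In both cases I would argue by contradiction: assuming $\min_{\mathbb{S}^n}u(\cdot,t_k)\to 0$ along some sequence of times, I would show that $\Omega_{t_k}$ degenerates enough to violate the conservation of a (possibly modified) $V_\varphi$ from Lemma \ref{V_q}. The two hypotheses of (i) and (ii) are used in different ways to produce this degeneration: the gradient estimate of Lemma \ref{Gra} in case (i), and the origin-symmetry forced by even data in case (ii).

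\smallskip

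\textbf{Case (i).} The gradient bound $|D\log u|\le C$ of Lemma \ref{Gra}, applied along a geodesic on $\mathbb{S}^n$ of length at most $\pi$ joining the minimum and maximum points of $u(\cdot,t)$, immediately yields
\begin{equation*}
\max_{\mathbb{S}^n}u(\cdot,t)\;\le\;e^{C\pi}\min_{\mathbb{S}^n}u(\cdot,t).
\end{equation*}
Hence $\min u(\cdot,t_k)\to 0$ forces $\max u(\cdot,t_k)\to 0$, and by Lemma \ref{u-r} also $r(\cdot,t_k)\to 0$ uniformly. Condition (i) says $(\log\varphi)'(s)\le -\epsilon/s$ for some $\epsilon>0$, so $\varphi(s)\ge\varphi(1)s^{-\epsilon}$ for $s\le 1$ and $\varphi(s)/s$ fails to be integrable at the origin. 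Replacing $\Phi$ in the proof of Lemma \ref{V_q} by the shifted antiderivative $\widehat\Phi(t):=\int_{1}^{t}\varphi(s)/s\,ds$ leaves the cancellation argument intact, so $\widehat V(\Omega_t):=\int_{\mathbb{S}^n}\widehat\Phi(r(\xi,t))\,d\xi$ is still conserved; but now $\widehat\Phi(t)\to -\infty$ as $t\to 0^+$, and the uniform smallness of $r$ forces $\widehat V(\Omega_{t_k})\to -\infty$, a contradiction.

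\smallskip

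\textbf{Case (ii).} Uniqueness for \eqref{u-NEq} together with the evenness of $f$ and $u_0$ propagates evenness of $u(\cdot,t)$ and hence origin-symmetry of $\Omega_t$. Under Assumption \ref{Ass1} the original $V_\varphi$ is finite and conserved. If $u(x_k,t_k)\to 0$ along some $x_k\in\mathbb{S}^n$, then by symmetry $u(-x_k,t_k)=u(x_k,t_k)\to 0$, and the containment $r(\xi,t_k)\xi\in\Omega_{t_k}$ gives the slab estimate
\begin{equation*}
r(\xi,t_k)\,|\xi\cdot x_k|\;\le\;u(x_k,t_k)\qquad\forall\,\xi\in\mathbb{S}^n.
\end{equation*}
Splitting $\mathbb{S}^n$ into $\{|\xi\cdot x_k|\le\delta\}$ (where I use only the global bound $r\le C$) and its complement (where the slab estimate gives $r\le u(x_k,t_k)/\delta$), and choosing $\delta=\sqrt{u(x_k,t_k)}$, yields
\begin{equation*}
V_\varphi(\Omega_{t_k})\;\le\;C\,\delta\,\Phi(C)+|\mathbb{S}^n|\,\Phi\bigl(\sqrt{u(x_k,t_k)}\bigr)\;\longrightarrow\;0,
\end{equation*}
because $\Phi$ is continuous with $\Phi(0^+)=0$. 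This contradicts $V_\varphi(\Omega_{t_k})\equiv V_\varphi(\Omega_0)>0$.

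\smallskip

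The step I expect to be most delicate is the modified conservation law in Case (i): Assumption \ref{Ass1} is deliberately absent there, so one must justify both that the shifted antiderivative $\widehat\Phi$ is well-defined for every value $r(\xi,t)$ encountered (automatic once $r>0$) and that the divergence $\widehat\Phi(t)\to -\infty$ at $0^+$ really dominates the finite sphere integration. Case (ii) is, by comparison, almost mechanical once origin-symmetry is in hand, as the slab produced by a vanishing $u(\pm x_k,t_k)$ makes the radial function small on essentially the whole sphere.
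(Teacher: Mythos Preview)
Your argument is correct in both cases, and in each you take a somewhat different route from the paper. For Case~(i) the paper is very terse: it records the oscillation bound $\max_{\mathbb{S}^n}\log u-\min_{\mathbb{S}^n}\log u\le C$ from Lemma~\ref{Gra} and asserts that together with \eqref{C0} this yields the lower bound on $u$; literally this still leaves open why $\max u$ cannot tend to $0$, and your shifted conserved quantity $\widehat V(\Omega_t)=\int_{\mathbb{S}^n}\widehat\Phi(r)\,d\xi$ supplies exactly that missing piece. The concern you flag there is unwarranted: the proof of Lemma~\ref{V_q} uses only $\Phi'(s)=\varphi(s)/s$, so any antiderivative of $\varphi(s)/s$ produces a conserved integral, and since $\widehat\Phi$ is increasing its divergence $\widehat\Phi(t)\to-\infty$ as $t\to0^+$ dominates the sphere integral uniformly once $r(\cdot,t_k)\to0$ uniformly. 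For Case~(ii) the paper also contradicts conservation of $V_\varphi$ but obtains the collapse of $r$ by soft compactness rather than by your explicit estimate: it invokes Blaschke selection (Theorem~\ref{K-0}) to extract a Hausdorff-subsequential limit $\Omega_{t_i}\to\Omega_0$, uses origin-symmetry together with $r_{\Omega_0}(\xi_0)=0$ to force $\Omega_0$ into a hyperplane, deduces $r(\cdot,t_i)\to0$ almost everywhere, and then applies the bounded convergence theorem. Your direct slab bound $r(\xi,t_k)\,|\xi\cdot x_k|\le u(x_k,t_k)$ with the explicit cutoff $\delta=\sqrt{u(x_k,t_k)}$ accomplishes the same thing quantitatively and avoids the selection theorem altogether; both approaches are standard, yours being the more elementary.
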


\begin{proof}
We only need prove the first inequality in \eqref{C00-1} by noticing
Lemma \ref{C0-u}.

Case (i): If \eqref{Gra-con} holds true, we have by virtue of
\eqref{C1-log}
\begin{eqnarray*}
\max_{\mathbb{S}^n}\log u(\cdot, t)
-\min_{\mathbb{S}^n}\log u(\cdot, t)\leq C\max_{\mathbb{S}^n}\frac{|Du|}{u}(\cdot, t)\leq C,
\end{eqnarray*}
which implies the positive lower bound of $u$ together with
\eqref{C0}.

Case (ii): $f$ and $u_0$ are even. We have
\begin{equation}\label{ubl}
\int_{\mathbb{S}^n}d\xi
\int_{0}^{r(\xi,0)}\frac{\varphi(s)}{s}ds=\int_{\mathbb{S}^n}d\xi
\int_{0}^{r(\xi,t)}\frac{\varphi(s)}{s}ds
\end{equation}
by Lemma \ref{V_q}. Here we use the idea in \cite{Chen-H19} to
complete our proof by contradiction. Assume $r(\xi, t)$ is not
uniformly bounded away from $0$ which means there exists $\inf_{x
\in \mathbb{S}^n}r(\xi, t_i)\rightarrow 0$ as $i\rightarrow \infty$,
where $t_i \in [0, T)$. Since $f$ and $u_0$ are even, $r(\xi, t)$ is
even. Thus, $\Omega_t$ is a origin-symmetric body, where
$\Omega_{t}$ is the convex body containing the origin and $\partial
\Omega_t=\mathcal{M}_t$. Thus, using Theorem \ref{K-0}, we have
$\Omega_{t_i}$ (after choosing a subsequence) converges to a
origin-symmetric convex body $\Omega_0$. Then, we have by Theorem
\ref{r}
$$\inf_{\xi \in \mathbb{S}^n}r_{\Omega_0}(\xi)=0.$$
So, there exists $\xi_0 \in \mathbb{S}^n$ such that
$r_{\Omega_0}(\xi_0)=0$ and thus $r_{\Omega_0}(-\xi_0)=0$, which
implies $\Omega_0$ contained in a lower-dimensional subspace. This
means that
$$r(\xi, t_i)\rightarrow 0$$
as $i\rightarrow \infty$ almost everywhere with respect to the
spherical Lebesgue measure. Combined with bounded convergence
theorem, we conclude
\begin{equation*}
\int_{\mathbb{S}^n}d\xi
\int_{0}^{r(\xi,0)}\frac{\varphi(s)}{s}ds=\int_{\mathbb{S}^n}d\xi
\int_{0}^{r(\xi,t_i)}\frac{\varphi(s)}{s}ds \rightarrow 0
\end{equation*}
as $i\rightarrow \infty$, which is a contraction to \eqref{ubl}. So,
we complete our proof.
\end{proof}

The $C^0$ and $C^1$ estimates of $u$ imply the corresponding $C^0$
and $C^1$ estimates of $r$ by using \eqref{2.3-1} and Lemma
\ref{u-r}.

\begin{corollary}\label{C0-r}
Under the assumptions in Theorem \ref{main}, if $X(\cdot, t)$ is a
strictly convex solution to the flow \eqref{N-Eq},  then we have
\begin{eqnarray}\label{r-C0}
\frac{1}{C}\leq r(\xi, t)\leq C, \quad \forall (\xi, t) \in
\mathbb{S}^n\times[0, T),
\end{eqnarray}
\begin{eqnarray}\label{C1_r}
|D r|(\xi, t)\leq C, \quad \forall (\xi, t) \in
\mathbb{S}^n\times[0, T),
\end{eqnarray}
and
\begin{eqnarray}\label{scal_1}
\frac{1}{C}\leq \theta(t)\leq C, \quad \forall t \in [0, T).
\end{eqnarray}
\end{corollary}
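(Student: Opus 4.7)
The plan is to obtain all three estimates as immediate consequences of the previously established bounds on the support function, combined with the geometric identities relating $u$ and $r$. Since Lemma \ref{C0-u-1} and Lemma \ref{C0-u} already give uniform (in $t$) two-sided bounds $\tfrac{1}{C} \leq u \leq C$ and $|Du| \leq C$, the work remaining is essentially algebraic manipulation.

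First I would handle the upper bound on $r$: by identity \eqref{2.3}, $r = \sqrt{u^2 + |Du|^2}$, and both terms under the root are uniformly bounded, giving $r \leq C$. For the lower bound on $r$, the cleanest route is Lemma \ref{u-r}, which gives $\min_{\mathbb{S}^n} r(\cdot,t) = \min_{\mathbb{S}^n} u(\cdot,t) \geq \tfrac{1}{C}$; this establishes \eqref{r-C0}.

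Next I would derive the gradient estimate \eqref{C1_r}. Rearranging identity \eqref{2.3-1}, one gets
\begin{equation*}
|Dr|^2 = \frac{r^2(r^2 - u^2)}{u^2}.
\end{equation*}
Because $r$ is bounded above and $u$ is bounded below away from zero by the previous step, the right-hand side is uniformly bounded, yielding $|Dr| \leq C$.

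Finally for \eqref{scal_1}, I would use the continuity of $\varphi$ on the compact interval $[1/C, C]\subset (0,\infty)$: the values $\varphi(r(\xi,t))$ lie in a compact subset of $(0,\infty)$, so $\int_{\mathbb{S}^n} \varphi(r(\xi,t))\, d\xi$ is bounded above and below by positive constants independent of $t$. Since $\int_{\mathbb{S}^n} f(x)\, dx$ is a fixed positive constant, the definition $\theta(t) = \int_{\mathbb{S}^n}\varphi(r)\, d\xi \big/ \int_{\mathbb{S}^n} f\, dx$ yields the two-sided bound. There is no genuine obstacle here; the only thing to double-check is that the hypotheses of Theorem \ref{main} actually feed through Lemma \ref{C0-u-1} so that the lower bound on $u$ really is available in both cases (i) and (ii), which is exactly what Lemma \ref{C0-u-1} is designed to ensure.
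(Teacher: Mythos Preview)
Your proposal is correct and follows essentially the same route as the paper, which dispatches the corollary in a single sentence by invoking identity \eqref{2.3-1} and Lemma \ref{u-r} once the $C^0$ and $C^1$ bounds on $u$ are in hand. The only cosmetic difference is that you obtain the upper bound on $r$ from \eqref{2.3} rather than from the equality $\max_{\mathbb{S}^n} r = \max_{\mathbb{S}^n} u$ in Lemma \ref{u-r}; either works.
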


\section{$C^2$-estimates}

In this section we establish uniformly positive and lower bounds for
the principle curvatures for the normalised flow \eqref{N-Eq}. We
first use the technique that was first introduced by Tso \cite{Tso}
to derive the upper bound of the Gauss curvature along the flow
\eqref{N-Eq}, see also the proof of Lemma 4.1 in \cite{LiQ} and
Lemma 5.1 in \cite{Chen-H19}.

\begin{lemma}\label{K}
Let $X(\cdot, t)$ be a strictly convex solution to the flow
\eqref{N-Eq} which encloses the origin for $t \in [0,T) $. Then,
there exists a positive constant $C$ depending only $\varphi$,
$\max_{\mathbb{S}^n\times[0, T)}u$ and $\min_{\mathbb{S}^n\times[0,
T)}u$, such that
\begin{eqnarray*}
\max_{\mathbb{S}^n}K(\cdot, t)\leq C, \quad \forall t \in[0, T).
\end{eqnarray*}
\end{lemma}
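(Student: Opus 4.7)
The strategy is a version of Tso's trick \cite{Tso} adapted to the present inhomogeneous setting, as in \cite{LiQ,Chen-H19}. By the already-established $C^0$ and $C^1$ bounds we have $c\le u\le C$ and $c\le r\le C$, so $\theta(t)$, $\varphi(r)$, $\varphi'(r)$ and $r^{n+1}/\varphi(r)$ are all uniformly bounded above and below away from zero. Hence bounding $K$ from above on $\mathbb{S}^n\times[0,T)$ is equivalent to bounding
$$\Psi(x,t):=\theta(t)\,f(x)\,\frac{r^{n+1}}{\varphi(r)}\,K \;=\; u-\partial_t u$$
from above. I fix $\epsilon_0\in(0,\tfrac12\min u)$ and introduce the auxiliary function
$$W(x,t):=\frac{\Psi(x,t)}{u(x,t)-\epsilon_0},$$
so that $W\ge c_0\Psi$ for a fixed $c_0>0$, and I plan to bound $W$ from above on $\mathbb{S}^n\times[0,T)$ by the parabolic maximum principle.

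At an interior space-time maximum $(x_0,t_0)$ (with $t_0>0$) of $W$, the relations
$$\partial_t\Psi\ge\frac{\Psi}{u-\epsilon_0}(u-\Psi),\qquad D\Psi=\frac{\Psi}{u-\epsilon_0}\,Du,\qquad \Psi_{ij}\le\frac{\Psi}{u-\epsilon_0}u_{ij}$$
hold at that point. Using $K=1/\det b$ with $b_{ij}=u_{ij}+u\delta_{ij}$ and $\partial_tK=-K b^{ij}\partial_t b_{ij}$, together with $\partial_t u=u-\Psi$, the flow equation \eqref{u-NEq} yields an evolution equation for $\Psi$ of the schematic form
$$\partial_t\Psi \;=\; \theta f\,\frac{r^{n+1}}{\varphi(r)}\Bigl[-Kb^{ij}\Psi_{ij}-K(u-\Psi)\sum_i b^{ii}\Bigr]+\mathcal{R},$$
where $\mathcal R$ collects the contributions of $\theta'(t)$ and of the derivatives of $r^{n+1}/\varphi(r)$ (which depend on $u,\,Du$) and is bounded by $C(1+\Psi)$ thanks to the $C^0$--$C^1$ estimates. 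Substituting the maximum-principle inequalities into this expression, the term containing $\Psi_{ij}$ becomes $-\frac{\Psi}{u-\epsilon_0}Kb^{ij}u_{ij}=-\frac{\Psi}{u-\epsilon_0}Kb^{ij}(b_{ij}-u\delta_{ij})$, i.e.
$$-\frac{n\,\Psi}{u-\epsilon_0}K+\frac{u\,\Psi}{u-\epsilon_0}K\sum_i b^{ii},$$
and the $\sum b^{ii}$ part combines with the $-K(u-\Psi)\sum b^{ii}$ contribution. A direct computation shows that the coefficient of $\sum b^{ii}$ becomes non-positive once $\Psi$ is large (because the $\epsilon_0$-shift produces a gain of order $\epsilon_0\,\Psi$), so that term can be discarded. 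What remains, after dividing through by the positive bounded factor $\theta f r^{n+1}/\varphi(r)$, is an inequality of the form
$$0\le-\frac{c\,\Psi^{2}}{u-\epsilon_0}+C\,\Psi+C,$$
which forces $\Psi(x_0,t_0)\le C$, and hence $W\le C$ everywhere.

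The principal technical obstacle is the remainder $\mathcal R$ generated by the inhomogeneous factor $r^{n+1}/\varphi(r)$: its time and spatial derivatives produce expressions involving $\varphi'(r)\,r_t$, $\varphi'(r)\,r_{ij}$ and $\varphi''(r)\,r_ir_j$, all of which involve $u_{ij}$ implicitly through $r=\sqrt{u^{2}+|Du|^{2}}$. The point is to re-expand these terms using $r_i=(uu_i+u_ku_{ki})/r$ and then to use the gradient condition $D\Psi=\frac{\Psi}{u-\epsilon_0}Du$ at $(x_0,t_0)$ to replace every derivative of $r$ by something involving $\Psi$ itself rather than $u_{ij}$. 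After this reduction, the $C^0$ and $C^1$ bounds from Lemmas \ref{C0-u}--\ref{C0-u-1} and Corollary \ref{C0-r} make every surviving term at worst linear in $\Psi$, so it is absorbed by the quadratic $-c\Psi^{2}/(u-\epsilon_0)$ coming from the maximum-principle step. This is the only delicate bookkeeping in the argument; once it is done, the bound on $K$ follows immediately.
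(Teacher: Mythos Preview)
Your outline follows Tso's auxiliary function, but the decisive step is missing and the final inequality you claim is not what the computation actually gives.

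First, the sign in your ``schematic'' evolution is reversed: since $\partial_t u=u-\Psi$, one has $\partial_t b_{ij}=b_{ij}-\Psi_{ij}-\Psi\delta_{ij}$ and hence $\partial_t K=-nK+Kb^{ij}\Psi_{ij}+K\Psi\sum_i b^{ii}$; the second-order term enters with a \emph{plus} sign. Substituting $\Psi_{ij}\le W\,u_{ij}$ at the maximum therefore produces $+\dfrac{n\Psi^2}{u-\epsilon_0}$, not $-\dfrac{n\Psi^2}{u-\epsilon_0}$. After combining with the lower bound $\partial_t\Psi\ge W(u-\Psi)$, the resulting inequality is
\[
\frac{\epsilon_0\,\Psi^2}{u-\epsilon_0}\sum_i b^{ii}\ \le\ \frac{(n+1)\Psi^2}{u-\epsilon_0}+\mathcal{R}+(\text{lower order}),
\]
so the quadratic term on the right is \emph{positive}. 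Discarding the $\sum_i b^{ii}$ contribution, as you propose, throws away the only term that can dominate it.

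Second, the remainder $\mathcal{R}$ is not $O(1+\Psi)$: at the critical point $r_t=(uu_t+u_ku_{kt})/r$ with $u_t,u_{kt}\sim\Psi$, so the $\varphi'(r)r_t$ and $u_t/(u-\epsilon_0)$ pieces contribute terms of order $\Psi^2$, not $\Psi$.

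The way the paper (and Tso's original argument) closes the estimate is precisely by \emph{retaining} the term $\epsilon_0\,\Psi W\sum_i b^{ii}$ and invoking the elementary inequality $H=\sum_i b^{ii}\ge nK^{1/n}$. Since $K$ is comparable to $W$ by the $C^0$--$C^1$ bounds, this yields a super-quadratic negative contribution $-cW^{2+1/n}$ in the inequality for $W_t$, which beats all the $+CW^{2}$ terms coming from $r_t$, $u_t$ and the $n\theta KW$ piece. Without this step the argument does not close; you should insert $H\ge nK^{1/n}$ rather than discard the $\sum_i b^{ii}$ term.
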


\begin{proof}
We apply the maximum principle to the following auxiliary function
defined on the unit sphere $\mathbb{S}^n$
$$W(x, t)=\frac{1}{\theta(t)}\frac{-u_t+u}{u-\varepsilon_0}=\frac{f(x)}{\varphi(r)}r^{n+1}\frac{K}{u-\varepsilon_0},$$
where
$$\varepsilon_0=\frac{1}{2}\min_{(x, t) \in \mathbb{S}^n\times [0, T)} u(x, t)>0.$$
At the maximum $x_0$ of $W$ for any fixed $t \in [0, T)$, we have at
$(x_0, t)$
\begin{eqnarray}\label{K-1}
0=\theta(t)W_i=\frac{-u_{ti}+u_i}{u-\varepsilon_0}+\frac{u_t-
u}{(u-\varepsilon_0)^2}u_i,
\end{eqnarray}
and
\begin{eqnarray}\label{K-2}
0\geq \theta(t)D_{ij}^2
W=\frac{-u_{tij}+u_{ij}}{u-\varepsilon_0}+\frac{(u_t-u)
u_{ij}}{(u-\varepsilon_0)^2},
\end{eqnarray}
where \eqref{K-1} was used in deriving the second equality above.
The inequality \eqref{K-2} should be understood in sense of
positive-semidefinite matrix. Hence,
\begin{eqnarray*}
u_{tij}+u_{t}\delta_{ij}\geq
\theta(t)(-b_{ij}+\varepsilon_0\delta_{ij})W+b_{ij}.
\end{eqnarray*}
Thus,
\begin{eqnarray*}
K_t=-Kb^{ij}(u_{tij}+u_{t}\delta_{ij})\leq-n K-\theta(t)K
W(-n+\varepsilon_0 H),
\end{eqnarray*}
where $H$ denotes the mean curvature of $X(\cdot, t)$. Noticing that
$H\geq n K^{\frac{1}{n}}$, we obtain
\begin{eqnarray*}
K_t\leq C W(1+W)-CW^{2+\frac{1}{n}}.
\end{eqnarray*}
Using the equation \eqref{u-NEq} and the inequality above, we have
\begin{eqnarray*}\label{K-t}
W_t&=&\bigg[\frac{f(x)}{\varphi(r)}\frac{r^{n+1}}{u-\varepsilon_0}\bigg]_tK
+\bigg[\frac{f(x)}{\varphi(r)}\frac{r^{n+1}}{u-\varepsilon_0}\bigg]K_t
\\&\leq&CW^2+C W-CW^{2+\frac{1}{n}},
\end{eqnarray*}
in view of
\begin{eqnarray*}\label{K-3}
u_{t} \approx CW+C, \quad  r_{t}=\frac{u u_t+u^k u_{kt}}{r}\approx C
W+C.
\end{eqnarray*}
Without loss of generality we assume that $K\approx W\gg 1$, which
implies that
\begin{eqnarray*}
W_t\leq 0.
\end{eqnarray*}
Therefore, we arrive at $W\leq C$ for some constant $C>0$ depending
on the $C^1$-norm of $r$ and $\varepsilon_0$. Thus, the priori bound
follows consequently.
\end{proof}

Now, we show the principle curvatures of $X(\cdot, t)$ are bounded
from below along the flow \eqref{N-Eq}. The proof is similar to
Lemma 4.2 in \cite{LiQ} and Lemma 5.1 in \cite{Chen-H19}.

\begin{lemma}\label{C22}
Let $X(\cdot, t)$ be a strictly convex solution to the flow
\eqref{N-Eq} which encloses the origin for $t \in [0,T) $. Then,
there exists a positive constant $C$ depending only $\varphi$, $q$,
$\max_{\mathbb{S}^n\times[0, T)}u$ and $\min_{\mathbb{S}^n\times[0,
T)}u$, such that the principle curvatures of $X(\cdot, t)$ are
bounded from below
\begin{eqnarray}\label{scal_2}
\kappa_i(x, t)\geq C, \quad \forall (x, t) \in \mathbb{S}^n\times[0,
T), \ \mbox{and}\ i=1,2...,n.
\end{eqnarray}
\end{lemma}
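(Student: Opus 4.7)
Bounding $\kappa_i$ from below is equivalent to bounding the eigenvalues $\lambda_i = 1/\kappa_i$ of $b_{ij} = u_{ij}+u\delta_{ij}$ from above, since then $\kappa_i = 1/\lambda_i \ge 1/\lambda_{\max}$. I will produce a uniform upper bound on $\lambda_{\max}$ via the parabolic maximum principle applied to the auxiliary quantity
\[
W(x,t) = \log \lambda_{\max}\bigl(b(x,t)\bigr) - A\log\bigl(u(x,t)-\varepsilon_0\bigr) + B\frac{|Du|^2}{2},
\]
where $\varepsilon_0 := \tfrac12 \min_{\mathbb{S}^n\times[0,T)} u > 0$ (strictly positive by Lemma \ref{C0-u-1}) and $A,B > 0$ are large constants to be chosen later. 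At points where $\lambda_{\max}$ is simple, $W$ is smooth and one may diagonalize $b$ so that $\lambda_{\max} = b_{11}$; the usual perturbation argument of replacing $\lambda_{\max}$ by the symmetric function $b_{\xi\xi}$ against a parallel frame $\xi$ handles the general case.

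The first step is to derive the evolution of $b_{ij}$ from \eqref{u-NEq}. Writing the flow as $u_t = -\Psi/\det b + u$ with $\Psi := \theta(t)f(x)r^{n+1}/\varphi(r)$ and differentiating twice on $\mathbb{S}^n$, commuting covariant derivatives via the Ricci identity \eqref{3C} yields a parabolic equation whose principal part is $(\Psi/\det b)\,b^{kl}D_kD_l$ and which picks up a sphere-curvature term of the shape $-(\Psi/\det b)(n\,b_{ij} - (\operatorname{tr} b)\,\delta_{ij})$. The second step is to evaluate $\partial_t W$ at an interior spatial maximum, using $DW = 0$ and $D^2 W \le 0$. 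The concavity of $\log\det$ on positive-definite matrices supplies a nonpositive quadratic-in-$Db_{11}$ term that absorbs cross terms coming from $D\lambda_{\max}$, and the $-A\log(u-\varepsilon_0)$ correction generates a strongly negative contribution proportional to $A\Psi/\bigl((u-\varepsilon_0)\det b\bigr)$. The upper bound $K \le C$ of Lemma \ref{K} gives $\det b \ge 1/C$, so this negative contribution is bounded below in absolute value by a positive multiple of $A\Psi$.

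The main obstacle is the inhomogeneous factor $\varphi(r)$. Because $r = \sqrt{u^2+|Du|^2}$ couples $\Psi$ to the Hessian of $u$, differentiating $\log\Psi$ twice produces terms involving $b_{ij}$ with coefficients built from $r\varphi'(r)/\varphi(r)$ and $r^2\varphi''(r)/\varphi(r)$. Thanks to Corollary \ref{C0-r} we have $r \in [1/C, C]$, and since $\varphi$ is smooth and positive on $(0,\infty)$ these logarithmic derivatives are uniformly bounded on that compact interval; the resulting bad contributions are therefore linear in $b_{11}$ with controlled coefficients and can be absorbed by the negative terms supplied by $A$ and $B$ once the latter are chosen sufficiently large (depending on $\varphi$, $\max u$, $\min u$, and $\|f\|_{C^2}$). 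Putting everything together, at a space-time maximum of $W$ one reaches an inequality of the form
\[
0 \le C_1 + C_2\,b_{11} - C_3\, b_{11}^{\,1+1/(n-1)},
\]
which forces $b_{11} \le M$ uniformly and hence $\lambda_{\max} \le M$, giving \eqref{scal_2}. The argument runs in parallel with Lemma 4.2 of \cite{LiQ} and Lemma 5.1 of \cite{Chen-H19}, where the case $\varphi(r) = r^q$ is treated by explicit computation; the only new ingredient needed to extend to general smooth $\varphi$ is the uniform bound on $r\varphi'(r)/\varphi(r)$ and $r^2\varphi''(r)/\varphi(r)$ provided by Corollary \ref{C0-r}.
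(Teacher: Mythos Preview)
Your auxiliary function and overall maximum-principle strategy match the paper's (the paper uses $-A\log u$ in place of your $-A\log(u-\varepsilon_0)$, which is immaterial). But the mechanism you describe for closing the estimate is wrong in two places, and as written the plan does not close.

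First, the term you single out, $A\Psi/\bigl((u-\varepsilon_0)\det b\bigr)$, enters $\partial_t W$ with a \emph{positive} sign: $\partial_t\bigl(-A\log(u-\varepsilon_0)\bigr)=-A u_t/(u-\varepsilon_0)$ and $u_t=-\Psi/\det b+u$, so this is a bad term, not a ``strongly negative contribution.'' Even ignoring the sign, the bound $\det b\ge 1/C$ from Lemma~\ref{K} makes this term only $O(A)$, with no power of $b_{11}$; it cannot dominate the $O(b_{11})$ terms that differentiating $\log\Psi$ twice produces. Second, no term of order $b_{11}^{\,1+1/(n-1)}$ appears anywhere in the computation; your model inequality is not the one that actually arises. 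In the paper (and in \cite{LiQ,Chen-H19}) the decisive negative term is \emph{linear} in $b_{11}$ and comes from the $B|Du|^2$ barrier: contracting the maximum condition with $b^{ii}$ gives $2B\,b^{ii}u_{ki}u_{ki}=2B\sum_i(b_{ii}-u)^2/b_{ii}\ge 2B\sum_i b_{ii}-4nBu$, and it is $-2B\sum_i b_{ii}\le -2B\,b_{11}$ that beats the fixed $C\,b_{11}$ once $B$ is chosen large enough (that $C$ is independent of $A,B$). The constant $A$ has a different job: one takes $A\gg B$ afterwards so that the coefficient $2B|Du|^2-A-1$ in front of $\sum_i b^{ii}$ becomes negative, disposing of that otherwise uncontrolled sum. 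Your plan effectively reverses the roles of $A$ and $B$ and misidentifies where the dominating negative term comes from.
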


\begin{proof}
We consider the auxiliary function
\begin{eqnarray*}
\widetilde{\Lambda}(x, t)=\log \lambda_{\max}(\{b_{ij}\})-A\log u+B
|D u|^2,
\end{eqnarray*}
where $A$ and $B$ are positive constants which will be chosen later,
and $\lambda_{max}(\{b_{ij}\})$ denotes the maximal eigenvalue of
$\{b_{ij}\}$. For convenience, we write $\{b^{ij}\}$ for
$\{b_{ij}\}^{-1}$.

For any fixed $t \in [0, T)$, we assume the maximum
$\widetilde{\Lambda}$ is achieved at some point $x_0 \in
\mathbb{S}^n$. By rotation, we may assume $\{b^{ij}(x_0, t)\}$ is
diagonal and $\lambda_{\max}(\{b_{ij}\})(x_0, t)=b_{11}(x_0, t)$.
Thus, it is sufficient to prove $b_{11}(x_0, t)\leq C$.

Then, we define a new auxiliary function
\begin{eqnarray*}
\Lambda(x, t)=\log b_{11}-A\log u+B |D u|^2,
\end{eqnarray*}
which attains the local maximum at $x_0$ for fixed time $t$. Thus,
we have at $x_0$
\begin{eqnarray}\label{C2-1d}
0=D_i\Lambda=b^{11}b_{11; i}-A\frac{u_i}{u}+2B \sum_{k}u_k u_{ki}
\end{eqnarray}
and
\begin{eqnarray}\label{C2-2d}
0\geq D_iD_j\Lambda=b^{11}b_{11; ij}-(b^{11})^2 b_{11; i}b_{11;
j}-A\bigg(\frac{u_{ij}}{u}-\frac{u_i u_j}{u^2}\bigg) +2B
\sum_{k}\bigg(u_{kj} u_{ki}+u_ku_{kij}\bigg).
\end{eqnarray}
We can rewrite the equation \eqref{u-NEq} as
\begin{eqnarray}\label{C2-1}
\log(u-u_t)=-\log\det(b)+\alpha(x, t),
\end{eqnarray}
where
\begin{eqnarray*}
\alpha(x, t)=\log
\bigg(\theta(t)\frac{f(x)r^{n+1}}{\varphi(r)}\bigg).
\end{eqnarray*}
Differentiating \eqref{C2-1} gives
\begin{eqnarray}\label{C2-00}
\frac{u_k-u_{kt}}{u-u_t}=-b^{ij}b_{ij; k}+D_k \alpha
\end{eqnarray}
and
\begin{eqnarray}\label{C2-11}
\frac{u_{11}-u_{11t}}{u-u_t}=\frac{(u_{1}-u_{1t})^2}{(u-u_t)^2}-b^{ii}b_{ii;
11} +b^{ii}b^{jj}(b_{ij; 1})^2+D_1D_1 \alpha.
\end{eqnarray}
Recalling the Ricci identity \eqref{RI}
\begin{eqnarray*}
b_{ii; 11}=b_{11; ii}-b_{11}+b_{ii},
\end{eqnarray*}
which is taken into \eqref{C2-11} implies
\begin{eqnarray}\label{C2-111}
\frac{u_{11}-u_{11t}}{u-u_t}=\frac{(u_{1}-u_{1t})^2}{(u-u_t)^2}
-b^{ii}b_{11; ii}+\sum_{i}b^{ii}b_{11}-n +b^{ii}b^{jj}(b_{ij;
1})^2+D_1D_1 \alpha.
\end{eqnarray}
So, we have
\begin{eqnarray}\label{2-t}
\frac{\partial_t \Lambda}{u-u_t}&=&
b^{11}\bigg(\frac{u_{11t}-u_{11}}{u-u_t}+\frac{u_{11}+u-u+u_t}{u-u_t}\bigg)-
A\frac{1}{u}\frac{u_t-u+u}{u-u_t}+2B\frac{u^k u_{kt}}{u-u_t}
\\ \nonumber&=&b^{11}\bigg[-\frac{(u_{1}-u_{1t})^2}{(u-u_t)^2}
+b^{ii}b_{11; ii}-\sum_{i}b^{ii}b_{11} -b^{ii}b^{jj}(b_{ij;
1})^2-D_1D_1 \alpha\bigg]\\ \nonumber&&+\frac{1-A}{u-u_t}
+\frac{A}{u}+2B\frac{\sum_{k}u_k u_{kt}}{u-u_t}+(n-1)b^{11}.
\end{eqnarray}
We know from \eqref{C2-2d} and \eqref{C2-00}
\begin{eqnarray*}
0&\geq&b^{11}[b^{ii}b_{11; ii}-b^{ii}b^{11} (b_{i1;
1})^2]-A\frac{n}{u}+A\sum_{i}b^{ii}+Ab^{ii}\frac{u_i
u_i}{u^2}\\&&+2B \bigg[b^{ii}(b_{ii}-u)^2+\sum_{k}u_k(D_k
\alpha-\frac{u_k-u_{kt}}{u-u_t})-b^{ii}u_iu_i\bigg]\\&\geq&b^{11}[b^{ii}b_{11;
ii}-b^{ii}b^{jj} (b_{ij; 1})^2]-A\frac{n}{u}+A
\sum_{i}b^{ii}+Ab^{ii}\frac{u_i u_i}{u^2}\\&&+2B
\bigg[\sum_{i}b^{ii}(b^{2}_{ii}-2ub_{ii})+\sum_{k}u_k(D_k
\alpha-\frac{u_k-u_{kt}}{u-u_t})-b^{ii}u_iu_i\bigg]\\&\geq&b^{11}[b^{ii}b_{11;
ii}-b^{ii}b^{jj} (b_{ij; 1})^2]-A\frac{n}{u}+A
\sum_{i}b^{ii}+Ab^{ii}\frac{u_i u_i}{u^2}\\&&+2B
\bigg[\sum_{i}b_{ii}-2nu+\sum_{k}u_k(D_k
\alpha-\frac{u_k-u_{kt}}{u-u_t})-b^{ii}u_iu_i\bigg].
\end{eqnarray*}
Thus, plugging the inequality above into \eqref{2-t} gives
\begin{eqnarray}\label{2-t-1}
\frac{\partial_t \Lambda}{u-u_t}&\leq&-b^{11}D_1D_1
\alpha-2B\sum_{k}u_kD_k \alpha+\frac{1-A+2B|Du|^2}{u-u_t}
\\ \nonumber&&+\frac{(n+1)A}{u}+(n-1)b^{11}+(2B|Du|-A-1)\sum_{i}b^{ii}\\ \nonumber&&-Ab^{ii}\frac{u_i
u_i}{u^2}-2B \sum_{i}b_{ii}+4nBu.
\end{eqnarray}
Now, we need estimate the first two terms in the inequality above.
Clearly, a direct calculation results in
\begin{eqnarray*}
r_i=\frac{u u_i+\sum_{k}u_k u_{ki}}{r}=\frac{u_i b_{ii}}{r}
\end{eqnarray*}
and
\begin{eqnarray*}
r_{ij}=\frac{u u_{ij}+u_iu_j+\sum_{k}u_k u_{kij}+\sum_{k}u_{kj}
u_{ki}}{r}-\frac{u_i u_i b_{ii}b_{jj}}{r^3}.
\end{eqnarray*}
Hence, we obtain by Lemma \ref{C0-u}, Lemma \ref{C0-u-1} and
Corollary \ref{C0-r}
\begin{eqnarray*}
&&-b^{11}D_1D_1\alpha-2B\sum_{k}u_kD_k \alpha\\&=&-
b^{11}\bigg[\frac{f_{11}}{f}-\frac{f^{2}_{1}}{f^2}-
(n+1)\frac{r_{1}^{2}}{r^2}+\frac{(\varphi^{\prime})^2r_{1}^{2}}{\varphi^2}-\frac{\varphi^{\prime
\prime}r_{1}^{2}}{\varphi}\bigg]
-b^{11}\bigg[(n+1)\frac{1}{r}-\frac{\varphi^{\prime}}{\varphi}\bigg]r_{11}\\&&-2B\sum_{k}u_k
\bigg(\frac{f_k}{f}+[(n+1)\frac{1}{r}-\frac{\varphi^{\prime}}{\varphi}]r_k\bigg)\\&\leq&C
b^{11}(1+b_{11})+C
B-\bigg[(n+1)\frac{1}{r}-\frac{\varphi^{\prime}}{\varphi}\bigg](b^{11}r_{11}+2B
u_k r_k)\\&\leq&C b^{11}(1+b_{11}+b^{2}_{11})+C
B-\bigg[(n+1)\frac{1}{r}-\frac{\varphi^{\prime}}{\varphi}\bigg]\bigg(b^{11}\frac{u_ku_{k11}}{r}+2B
\frac{u_k u_ku_{kk}}{r}\bigg).
\end{eqnarray*}
Then, using \eqref{C2-1d}, we have
\begin{eqnarray*}
&&-b^{11}D_1D_1\alpha-2B\sum_{k}u_kD_k \alpha
\\&\leq&C b^{11}(1+b_{11}+b^{2}_{11})+C
B-\bigg[(n+1)\frac{1}{r}-\frac{\varphi^{\prime}}{\varphi}\bigg]\frac{u_k}{r}\bigg(A\frac{u_k}{u}-b^{11}u_1\delta_{k1}\bigg)
\\&\leq&C b^{11}(1+b_{11}+b^{2}_{11})+C
B+CA.
\end{eqnarray*}
Thus, using the inequality above, we conclude from \eqref{2-t-1}
\begin{eqnarray*}
\frac{\partial_t \Lambda}{u-u_t}&\leq&C(b^{11}+1+b_{11})+C B+CA
+\frac{(n+1)A}{u}+(n-1)b^{11}+(2B|Du|-A-1)\sum_{i}b^{ii}\\&&-Ab^{ii}\frac{u_i
u_i}{u^2}-2B \sum_{i}b_{ii}+4nBu\\&<&0,
\end{eqnarray*}
provided $b_{11}\gg 1$ and if we choose $A\gg B$. So we complete the
proof.
\end{proof}

\section{The convergence of the normalised flow}

With the help of a prior estimates in the section above, we show the
long-time existence and asymptotic behaviour of the normalised flow
\eqref{N-Eq} which complete Theorem \ref{main}.

\begin{proof}
Since the equation \eqref{u-NEq} is parabolic, we have the short
time existence. Let $T$ be the maximal time such that $u(\cdot, t)$
is a positive, smooth and strictly convex solution to \eqref{u-NEq}
for all $t \in [0, T)$. Lemmas \ref{C0-u}, \ref{Gra}, \ref{K} and
Corollary \ref{C0-r} enable us to apply Lemma \ref{C22} to the
equation \eqref{u-NEq} and thus we can deduce a uniformly lower
estimate for the biggest eigenvalue of $\{(u_{ij}+u\delta_{ij})(x,
t)\}$. This together with Lemma \ref{C22} implies
\begin{eqnarray*}
C^{-1}I \leq (u_{ij}+u\delta_{ij})(x, t)\leq C I, \quad \forall (x,
t) \in \mathbb{S}^n \times [0, T),
\end{eqnarray*}
where $C>0$ depends only on $n, \alpha, f$ and $u_0$. This shows
that the equation \eqref{u-NEq} is uniformly parabolic. Using
Evans-Krylov estimates and Schauder estimates, we obtain
\begin{eqnarray*}
|u|_{C^{l, m}_{x, t}(\mathbb{S}^n \times [0, T))}\leq C_{l, m}
\end{eqnarray*}
for some $C_{l, m}$ independent of $T$. Hence $T=\infty$. The
uniqueness of the smooth solution $u(\cdot, t)$ follows by the
parabolic comparison principle.

By the monotonicity of $\mathcal{J}_{\varphi}$ (See Lemma
\ref{J}), and noticing that
\begin{eqnarray*}
|\mathcal{J}_{\varphi}(X(\cdot, t))|\leq C,\quad \forall t \in
[0, \infty),
\end{eqnarray*}
we conclude that
\begin{eqnarray*}
\int_{0}^{\infty}|\frac{d}{dt}\mathcal{J}_{\varphi}(X(\cdot,
t))|\leq C.
\end{eqnarray*}
Hence, there is a sequence $t_i\rightarrow\infty$ such that
\begin{eqnarray*}
\frac{d}{dt}\mathcal{J}_{\varphi}(X(\cdot, t_i))\rightarrow 0.
\end{eqnarray*}
In view of Lemma \ref{J}, we see that $u(\cdot, t_i)$ converges
smoothly to a positive, smooth and strictly convex $u_{\infty}$
solving \eqref{Min} with $f$ replaced by $\lambda_0f$ with
$\lambda_0=\lim_{t_i\rightarrow \infty}\theta(t_i)$.
\end{proof}

\end{document}